\newenvironment{enum}{  
\begin{enumerate}[\upshape(\arabic{section}.\arabic{equation}a)] }  
{  \end{enumerate}   }
\newcommand{\itemref}[2] {{\upshape(\ref{#1}\ref{#2})}}
\theoremstyle{plain}
\newtheorem{Thm}{Theorem}
\newtheorem*{MThm-rev}{Theorem B [expanded statement]}
\newtheorem{Cor}[Thm]{Corollary}
\newtheorem{Prop}[Thm]{Proposition}
\newtheorem{Lem}[Thm]{Lemma}
\newtheorem{Def}[Thm]{Definition}
\theoremstyle{definition}
\newtheorem{Remark}[Thm]{Remark}
\newtheorem{Tech}[Thm]{Technical Remark}
\newtheorem{Ex}[Thm]{Example}
\renewcommand{\Im}{\operatorname{Im}}
\renewcommand{\bar}{\overline}
\renewcommand{\tilde}{\widetilde}
\newcommand{\intl}{\int\limits}
\newcommand{\C}{\mathbb C}
\newcommand{\R}{\mathbb R}
\newcommand{\CP}{\mathbb{CP}}
\newcommand{\dee}{\partial}
\newcommand{\deebar}{\overline\partial}
\newcommand{\vf}[1]{\dfrac{\dee}{\dee #1}}
\newcommand{\lma}{\langle\langle}
\newcommand{\rma}{\rangle\rangle}
\newcommand{\dual}{\mathscr D}
\newcommand{\dualx}{\mathscr D^\star}
\newcommand{\mapdef}[4]{ #1 &\to #2 \\ #3 &\mapsto #4 }
\newcommand{\bndry}{b}
\newcommand{\w}{\wedge}
\newcommand{\inv}{^{-1}}
\numberwithin{equation}{section}
\begin{document}
\title[Sums of CR functions from competing CR structures]
{Sums of CR functions from competing CR structures}
\author[David E. Barrett]{David E. Barrett}
\address{Department of Mathematics\\University of Michigan
\\Ann Arbor, MI  48109-1043  USA }
\email{barrett@umich.edu}

\author[Dusty E. Grundmeier]{Dusty E. Grundmeier}
\address{Department of Mathematics \\
Harvard University \\
 Cambridge, MA 02138-2901  USA }
\email{deg@math.harvard.edu}

\thanks{{\em 2010 Mathematics Subject Classification:} 	32V10}

\thanks{The first author was supported in part by NSF grants number DMS-1161735 and DMS-1500142.}

\date{\today}

\begin{abstract}  
In this paper we characterize sums of CR functions from  competing CR structures in two scenarios. 
In one scenario the structures are conjugate and we are adding to the theory of pluriharmonic boundary values.  In the second scenario the structures are related by projective duality considerations.  In both cases we provide explicit vector field-based characterizations for two-dimensional circular domains satisfying natural convexity conditions.
\end{abstract}

\maketitle

\section{Introduction}\label{S:Intro}

The Dirichlet problem for pluriharmonic functions is a natural problem in several complex variables with a long history going back at least to Amoroso \cite{Amo}, Severi \cite{Sev}, Wirtinger \cite{Wir}, and others. It was known early on that the problem is not solvable for general boundary data, so we may try to characterize the admissible boundary values with a system of tangential partial differential operators. This was first done for the ball by Bedford in \cite{Bed1}; see \S \ref{SS:ball} for details. More precisely, given a bounded domain $\Omega$ with smooth boundary $S$, we seek a system $\mathcal{L}$ of partial differential operators tangential to $S$ such that  a function $u\in \mathcal{C}^{\infty}(S,\C)$ satisfies $\mathcal{L}u=0$  if and only if there exists $U\in \mathcal{C}^{\infty}(\overline{\Omega})$ such that $U|_S = u$ and $\partial \bar{\partial} U =0$. The problem may also be considered locally.

While natural in its own right, this problem also arises in less direct fashion in many areas of 
complex analysis and geometry.  For instance, this problem plays a fundamental role in Graham's work on the Bergman Laplacian \cite{Gra}, Lee's work on pseudo-Einstein structures \cite{Lee}, and Case, Chanillo, and Yang's work on CR Paneitz operators (see \cite{CCY} and the references therein). From another point of view, the existence of non-trivial restrictions on pluriharmonic boundary values points to the need to look elsewhere (such as to the Monge-Amp\`ere equations studied in  \cite{BeTa}) for Dirichlet problems solvable for general boundary data.

The pluriharmonic boundary value problem is closely related to the problem of characterizing sums of CR functions from different, competing CR structures; indeed, when the competing CR structures are conjugate then these problems coincide (in simply-connected settings);  see Propositions \ref{T:glo-prob} and \ref{T:loc-prob} below. Another natural construction leading to competing CR structures arises from the study of projective duality (see \S \ref{S:proj} or \cite{Bar} for precise definitions). 

In each of these two scenarios, we precisely characterize sums of CR functions from the two competing CR structures in the setting of two-dimensional circular domains satisfying appropriate convexity conditions.
For conjugate structures we assume strong pseudoconvexity; our result appears as   Theorem \ref{T:cir-plh} below. In the projective duality scenario we assume strong convexity (the correct assumption without the circularity assumption would be strong $\C$-convexity, but these notions coincide in the circular case; see \S \ref{S:pdh}), and the main result appears as Theorem \ref{T:cir-proj} below (with an expanded version appearing later in \S \ref{S:cir-proj}).  Our techniques for these two related problems are interconnected to a surprising extent, and the reader will notice that the projective dual scenario actually turns out to have more structure and symmetry. 

\begin{restatable}{MThm}{conj}
\label{T:cir-plh}
Let $S\subset\C^2$ be a  strongly pseudoconvex circular hypersurface.  Then there exist
nowhere-vanishing tangential vector fields $X,Y$ on $S$ satisfying the following conditions.
\refstepcounter{equation}\label{N:XY-cond}
\begin{enum}
\item If $u$ is a smooth function on a relatively open subset of $S$, then $u$ is CR  if and only if $Xu=0$.
\label{I:aaa}
\item If $u$ is a smooth function on a relatively open subset of $S$, then $u$ is CR  if and only if
$Y\bar u=0$.\label{I:bbb}
\item If $S$ is compact, then a smooth function $u$ on $S$ is a pluriharmonic boundary value (in the sense of Proposition \ref{T:glo-prob} below) if and only if $XXYu=0$. \label{I:cir-plh-glo}
\item A smooth function $u$ on a relatively open subset of $S$ is a pluriharmonic boundary value (in the sense of Proposition \ref{T:loc-prob} below) if and only if $XXYu=0=\bar{XXY}u$.  \label{I:cir-plh-loc}
\end{enum}
\end{restatable}

\begin{restatable}{MThm}{proj}
\label{T:cir-proj}
Let $S\subset\C^2$ be a  strongly convex circular hypersurface.  Then there exist
nowhere-vanishing tangential vector fields $X,T$ on $S$ satisfying the following conditions.
\refstepcounter{equation}\label{N:XT-cond-pre}
\begin{enum}
\item  If $u$ is a smooth function on a relatively open subset of $S$, then $u$ is CR  if and only if $Xu=0$.\label{I:bb-pre}
\item If $u$ is a smooth function on a relatively open subset of $S$, then $u$ is dual-CR  if and only if $Tu=0$.\label{I:cc-pre}
\item If $S$ is compact, then a smooth function $u$ on $S$ is the sum of a CR function and a dual-CR function if and only if $XXTu=0$. \label{I:cir-proj-glo-pre}
\item If $S$ is simply-connected (but not necessarily compact), then a smooth function $u$ on  $S$ is  the sum of a CR function and a dual-CR function if and only if $XXTu=0=TTXu$. \label{I:cir-proj-loc-pre}
\end{enum}
\end{restatable}

This paper is organized as follows. In \S \ref{S:conj} we focus on the case of conjugate CR structures (the pluriharmonic case). In \S \ref{S:proj} we study the competing CR structures coming from projective duality. In \S \ref{S:pfs} we prove Theorem B, while Theorem A is proved in \S \ref{S:pfs2}.   The final section \S \ref{S:furth} includes a discussion of uniqueness issues.

\section{Conjugate structures}\label{S:conj}

\subsection{Results on the ball} \label{SS:ball}

Early work focused on the case of the ball $B^n$ in $\mathbb{C}^n$. In particular, Nirenberg observed that there is no second-order system of differential operators tangent to $S^3$ that exactly characterize pluriharmonic functions (see \S \ref{S:Nir} for more details). Third-order characterizations were developed by Bedford in the global case and Audibert in the local case (which requires  stronger conditions). To state these results, we define the tangential operators 
\begin{align}
L_{kl} = z_k \vf{\bar{z}_l}-z_l \vf{\bar{z}_k} && \bar{L_{kl}} = \bar{z}_k \vf{z_l}-\bar{z}_l \vf{z_k}
\end{align}
for $1\leq k,l \leq n$. 

\begin{Thm}[{[Bed1]}]\label{T:Bglobal}
Let $u$ be smooth on $S^{2n-1}$, then $$\bar{L_{kl}} \bar{L_{kl}} L_{kl} u=0$$ for $1\leq k,l \leq n$ if and only if $u$ extends to a pluriharmonic function on $B^n$. \end{Thm}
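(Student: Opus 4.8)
The plan is to pass to the $U(n)$-isotypic decomposition of $\mathcal C^\infty(S^{2n-1})$ into the spaces $H_{p,q}$ of restrictions of harmonic polynomials homogeneous of degree $p$ in $z$ and $q$ in $\bar z$, each of which is an irreducible $U(n)$-module. Since $B^n$ is simply connected, a smooth $U$ is pluriharmonic exactly when $U=f+\bar g$ with $f,g$ holomorphic, so the pluriharmonic boundary values are precisely $\bigoplus_p H_{p,0}\oplus\bigoplus_{q\ge 1}H_{0,q}$; equivalently, $u$ is a pluriharmonic boundary value iff its components in the ``interior'' spaces $H_{p,q}$ with $p,q\ge 1$ all vanish. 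The theorem thus reduces to understanding how the third-order operators act on the grading, and a direct check shows $L_{kl}\colon H_{p,q}\to H_{p+1,q-1}$ and $\bar{L_{kl}}\colon H_{p,q}\to H_{p-1,q+1}$, so that $\bar{L_{kl}}\bar{L_{kl}}L_{kl}\colon H_{p,q}\to H_{p-1,q+1}$.

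Necessity is then immediate from bidegree bookkeeping. On $H_{p,0}$ the rightmost factor $L_{kl}$ already vanishes (there is no $\bar z$ to differentiate), while on $H_{0,q}$ the composite lands in $H_{-1,q+1}=0$ after the second $\bar{L_{kl}}$; hence $\bar{L_{kl}}\bar{L_{kl}}L_{kl}$ kills every edge component and therefore every pluriharmonic boundary value. I would also record here that, since distinct bidegrees of a fixed total degree are sent to distinct target spaces, the equations decouple across the $H_{p,q}$, so sufficiency may be treated one interior component at a time.

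For sufficiency I must show the system $\{\bar{L_{kl}}\bar{L_{kl}}L_{kl}\}_{k,l}$ is injective on each $H_{p,q}$ with $p,q\ge 1$. The plan is to combine irreducibility of $H_{p,q}$ with a single explicit nonvanishing computation: on the highest weight vector $z_1^p\bar z_n^q\in H_{p,q}$ the triple $(L_{1n},[\bar{L_{1n}},L_{1n}],\bar{L_{1n}})$ is an $\mathfrak{sl}_2$-triple, which turns the computation into a short telescope giving $\bar{L_{1n}}\bar{L_{1n}}L_{1n}(z_1^p\bar z_n^q)=pq(p+1)\,z_1^{p-1}\bar z_n^{q+1}$, nonzero precisely when $p,q\ge1$. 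Provided the common kernel of the system on $H_{p,q}$ is a $U(n)$-submodule, irreducibility forces it to be $0$ or all of $H_{p,q}$, and the nonvanishing above excludes the latter, so the interior components of $u$ must vanish.

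The main obstacle is exactly this invariance point: the family $\{\bar{L_{kl}}\bar{L_{kl}}L_{kl}\}$ repeats the same index pair in all three slots and so is not visibly $U(n)$-stable, since conjugating by $g\in U(n)$ produces operators $\bar Y\bar Y Y$ for a general $Y$ in the span of the $L_{kl}$ rather than a basis element. I would resolve this by polarizing: the manifestly $U(n)$-invariant condition is that $\bar Y\bar Y Y\,u=0$ for every $Y$ in the CR vector-field module, and writing $Y=\sum c_{kl}L_{kl}$ shows this amounts to the vanishing of a polynomial of bidegree $(1,2)$ in $(c,\bar c)$ whose coefficients are the diagonal operators together with their off-diagonal polarizations; the technical heart is to verify that on $H_{p,q}$ the diagonal operators already force the polarized ones to vanish (this is automatic when $n=2$, where $L_{12}$ spans a line on which $U(2)$ acts through a character, so the single operator has an invariant kernel outright). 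As a guiding auxiliary identity I would compute $\sum_{k,l}\bar{L_{kl}}L_{kl}=-2q(n+p-1)$ on $H_{p,q}$, which shows the second-order system $\{L_{kl}\}$ is already injective for $q\ge1$; this pinpoints where genuinely third-order information is needed and matches Nirenberg's observation that no second-order system can detect pluriharmonicity.
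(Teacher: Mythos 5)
The paper does not prove this statement: it is quoted from Bedford's paper [Bed1], with the reader directed to \S 18.3 of [Rud] for a treatment. Your framework --- decomposing $\mathcal C^\infty(S^{2n-1})$ into the bigraded harmonic spaces $H_{p,q}$, identifying pluriharmonic boundary values with the edge components $H_{p,0}$ and $H_{0,q}$, and tracking how $L_{kl}$ and $\bar{L_{kl}}$ shift bidegree --- is exactly the standard treatment, and your necessity argument is complete and correct: the composite kills $H_{p,0}$ because $L_{kl}$ already does, and kills $H_{0,q}$ because the second $\bar{L_{kl}}$ lands in $H_{-1,q+1}=0$. The decoupling across bidegrees and the highest-weight computation $\bar{L_{1n}}\bar{L_{1n}}L_{1n}(z_1^p\bar z_n^q)=pq(p+1)z_1^{p-1}\bar z_n^{q+1}$ are also correct.

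The gap is the one you yourself flag and then do not close. Irreducibility of $H_{p,q}$ only forces the joint kernel to be trivial if that kernel is a $U(n)$-submodule, and the hypothesis of the theorem is the vanishing of the finitely many \emph{diagonal} operators $\bar{L_{kl}}\bar{L_{kl}}L_{kl}$, whose joint kernel is invariant only under the normalizer of the torus (permutations and diagonal unitaries), not under all of $U(n)$. Passing to the manifestly invariant polarized condition $\bar Y\bar Y Yu=0$ for all $Y$ in the span of the $L_{kl}$ requires knowing the bidegree-$(1,2)$ polynomial $c\mapsto \bar Y\bar Y Yu$ vanishes identically, and its vanishing at the finitely many basis vectors $c=e_{kl}$ does not give this. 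So for $n\ge 3$ the sufficiency direction is not proved; the step ``the diagonal operators already force the polarized ones to vanish on $H_{p,q}$'' is precisely the content of Bedford's theorem and cannot be deferred as a technicality. For $n=2$, where $L_{12}$ spans a line on which $U(2)$ acts by a character so that the kernel of the single operator is genuinely invariant, your argument is complete --- and this is in fact the only case the rest of the present paper builds on.
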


\begin{Thm}[{[Aud]}]\label{T:Alocal}  Let $S$ be a relatively open subset of $S^{2n-1}$,
and let $u$ be smooth on $S$. Then $$L_{jk} L_{lm} \bar{L_{rs}} u=0=\bar{L_{jk}} \bar{L_{lm}} L_{rs} u$$ for $1\leq j,k,l,m,r,s \leq n$ if and only if $u$ extends to a pluriharmonic function on a one-sided neighborhood of $S$. \end{Thm}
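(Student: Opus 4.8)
The plan is to establish the two implications separately: necessity by a direct tangential computation, and sufficiency by reducing to a CR/anti-CR splitting that is then completed by a CR-extension argument.

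\emph{Necessity.} Suppose $u$ extends to $U$ pluriharmonic on a one-sided neighborhood of $S$. Shrinking to a simply connected piece, I would write $U = f + \bar{g}$ with $f,g$ holomorphic. Since the operators $L_{kl}$ and $\bar{L_{kl}}$ are tangent to $S^{2n-1}$, it suffices to verify the identities for $U$ and then restrict. The fields $L_{kl}$ involve only $\partial/\partial\bar{z}$, so $L_{kl}f = 0$ for holomorphic $f$, and dually $\bar{L_{kl}}\bar{g}=0$; hence $\bar{L_{rs}}U = \bar{L_{rs}}f = \bar{z}_r f_s - \bar{z}_s f_r$ with $f_j = \partial f/\partial z_j$ holomorphic. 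Applying $L_{lm}$ and using $\partial f_s/\partial\bar{z} = 0$ collapses the product rule via Kronecker deltas to $L_{lm}\bar{L_{rs}}f = (z_l\delta_{mr}-z_m\delta_{lr})f_s - (z_l\delta_{ms}-z_m\delta_{ls})f_r$, which is again holomorphic; one more application of $L_{jk}$ annihilates it. Thus $L_{jk}L_{lm}\bar{L_{rs}}u = 0$, and the conjugate system vanishes by the symmetric computation.

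\emph{Sufficiency, and the step that finishes it.} The goal is to produce a splitting $u = F + \bar{G}$ on $S$ with $F$ and $G$ both CR (boundary values of holomorphic functions), using that locally the fields $\{L_{kl}\}$ span the $(0,1)$ tangential bundle, so $F$ is CR iff $L_{kl}F = 0$ for all $k,l$, while $\bar{G}$ is anti-CR iff $\bar{L_{kl}}\bar{G} = 0$. Granting such a splitting, since $S^{2n-1}$ is strongly pseudoconvex I would invoke the Lewy extension theorem to extend $F$ and $G$ holomorphically to the pseudoconvex (inner) side; then $\tilde F + \bar{\tilde G}$ is pluriharmonic on a one-sided neighborhood of $S$ and restricts to $u$. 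So the entire problem reduces to producing the CR/anti-CR splitting out of the hypothesized third-order equations.

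\emph{Obtaining the splitting.} The hypotheses enter through the observation that $L_{jk}L_{lm}\bar{L_{rs}}u = 0$ says each $L_{lm}\bar{L_{rs}}u$ is CR, and symmetrically each $\bar{L_{lm}}L_{rs}u$ is anti-CR. Organizing $u$ by bidegree $(p,q)$ under the unitary action on $S^{2n-1}$, the operator $L_{jk}L_{lm}\bar{L_{rs}}$ preserves the relevant grading up to a controlled shift, and I would show that the \emph{full} collection of these third-order operators is injective on the ``mixed'' components with $p,q\geq 1$, the two conjugate systems covering complementary halves of the range. The hypotheses then force all mixed components to vanish, leaving only the purely holomorphic ($q=0$) and purely anti-holomorphic ($p=0$) pieces, i.e.\ exactly the desired $F + \bar{G}$.

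\emph{Main obstacle.} The crux — and the reason the local statement demands all index combinations and both orderings, in contrast to the single diagonal operator used in the global Theorem \ref{T:Bglobal} — is carrying out this injectivity \emph{locally}. Globally one may lean on symmetry and the spherical-harmonic decomposition, but on a merely relatively open $S$ one must instead verify that the tangential symbol of the combined system is injective off the CR and anti-CR directions and then convert this into a local solvability/extension statement. Establishing that local injectivity, and confirming that the two (conjugate) systems together control the entire mixed range, is where the essential difficulty lies.
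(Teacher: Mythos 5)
First, a point of comparison: the paper does not prove Theorem \ref{T:Alocal} at all --- it is quoted from Audibert's thesis \cite{Aud} with a pointer to \S 18.3 of \cite{Rud} for a full treatment --- so there is no in-paper argument to match yours against. Your necessity direction is correct and standard: writing $U=f+\bar g$ on a simply connected piece of the one-sided neighborhood, the computation $L_{lm}\bar{L_{rs}}f=(z_l\delta_{mr}-z_m\delta_{lr})f_s-(z_l\delta_{ms}-z_m\delta_{ls})f_r$ does produce a holomorphic (hence CR) function killed by $L_{jk}$, and the conjugate system is symmetric. Your reduction of sufficiency to the splitting $u=F+\bar G$ with $F,G$ CR, followed by Lewy extension on the pseudoconvex side, is also exactly the right frame --- it is the content of the paper's Theorem \ref{T:loc-prob}.

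The genuine gap is that the one step carrying all the content --- deriving the splitting $u=F+\bar G$ from the two third-order systems on a merely relatively open $S$ --- is never carried out; you describe it and then label it ``where the essential difficulty lies.'' Worse, the mechanism you sketch does not survive localization: the bidegree decomposition of $u$ into components $H_{p,q}$ under the unitary action is an expansion in spherical harmonics on the whole sphere and has no meaning on a proper open subset, which is precisely why the global Theorem \ref{T:Bglobal} admits a softer proof than the local one. The fallback you offer (``the tangential symbol of the combined system is injective off the CR and anti-CR directions'') is not a proof either: symbol injectivity of an overdetermined third-order system does not by itself yield the existence of the decomposition, and the actual argument (in \cite{Aud}, reproduced in \cite[\S 18.3]{Rud}) instead uses the hypothesis that each $L_{lm}\bar{L_{rs}}u$ is CR and each $\bar{L_{lm}}L_{rs}u$ is conjugate-CR to build $F$ and $G$ as explicit primitives of this second-order data, with the two orderings of the operators needed to control the $\dee$- and $\deebar$-parts separately. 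Until that construction (or a genuine substitute) is supplied, the sufficiency direction is unproved.
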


For a treatment of both of these results along with further details and examples, see \S 18.3 of \cite{Rud}.

\subsection{Other results}

Laville \cite{Lav1, Lav2} also gave a fourth order operator to solve the global problem. In \cite{BeFe} Bedford and Federbush solved the local problem in the more general setting where $b\Omega$ has non-zero Levi form at some point. Later in \cite{Bed2}, Bedford used the induced boundary complex $(\partial \bar{\partial})_b$ to solve the local problem in certain settings. In Lee's work \cite{Lee} on pseudo-Einstein structures, he gives a characterization for abstract CR manifolds using third order pseudohermitian covariant derivatives. Case, Chanillo, and Yang study when the kernel of the CR Paneitz operator characterizes CR-pluriharmonic functions (see \cite{CCY} and the references therein). 

\subsection{Relation to decomposition on the boundary}\label{SS:conj-bg}

Outside of the proof of Theorem \ref{T:pair} below, all forms, functions, and submanifolds will be assumed $\mathcal{C}^\infty$-smooth. 

\begin{Prop} \label{T:glo-prob} Let $S\subset\C^n$ be a compact connected and simply-connected real hypersurface, and let $\Omega$ be the bounded domain with boundary $S$. Then for $u\colon S\to \C$
the following conditions are equivalent:
\refstepcounter{equation}\label{N:plh-CR-glob}
\begin{enum}
\item $u$ extends to a (smooth) function $U$ on $\bar\Omega$ that is pluriharmonic on $\Omega$.  
\label{I:gloext}
\item $u$ is the sum  of a CR function and a conjugate-CR-function. \label{I:glodecomp}
\end{enum}
\end{Prop}

\begin{proof}
In the proof that \itemref{N:plh-CR-glob}{I:gloext} implies \itemref{N:plh-CR-glob}{I:glodecomp}, the CR term is the restriction to $S$ of an anti-derivative for $\dee U$ on a simply-connected one-sided neighborhood of $S$, and the conjugate-CR term is the restriction to $S$ of an anti-derivative for $\deebar U$ on a one-sided neighborhood of $S$ (adjusting one term by a constant as needed).

To see that \itemref{N:plh-CR-glob}{I:glodecomp} implies \itemref{N:plh-CR-glob}{I:gloext} we use the global CR extension result \cite[Thm.\,2.3.2]{Hor} to extend the terms to holomorphic and conjugate-holomorphic functions, respectively; $U$ is then the sum of the extensions.
\end{proof}

\begin{Prop} \label{T:loc-prob}
Let $S\subset\C^n$ be a simply-connected strongly pseudoconvex real hypersurface.
Then for $u\colon S\to \C$ the following conditions are equivalent:
\refstepcounter{equation}\label{N:plh-CR-loc}
\begin{enum}
\item \label{I:locext} there is an open subset $W$ of $\C^n$ with $S\subset\bndry W$ (with $W$ lying locally on the pseudoconvex side of $S$) so that $u$ extends to a (smooth) function $U$ on $W\cup S$ that is pluriharmonic on $W$. 
\item \label{I:locdecomp} $u$ is the sum  of a CR function and a conjugate-CR-function.
\end{enum}
\end{Prop}

\begin{proof} The proof follows the proof of Theorem \ref{T:glo-prob} above, replacing the global CR extension result by  the Hans Lewy local CR extension result as stated in \cite[Sec.\,14.1, Thm.\,1]{Bog}.  
\end{proof}


\section{Projective dual structures}\label{S:proj}

\subsection{Projective dual hypersurfaces}  \label{S:pdh}

Let $S\subset\C^n$ be an oriented real hypersurface with defining function $\rho$.  $S$ is said to be {\em strongly $\C$-convex} if $S$ locally equivalent via a projective transformation (that is, via an automorphism of projective space) to a strongly convex hypersurface; this condition is equivalent to either of the following two equivalent conditions:
\refstepcounter{equation}\label{N:strCconv-cond}
\begin{enum}
\item the second fundamental form for $S$ is positive definite on the maximal complex subspace $H_zS$ of each $T_zS$;
\medskip
\item the complex tangent (affine) hyperplanes for $S$ lie to one side (the ``concave side") of $S$ near the point of tangency with minimal order of contact.
\end{enum}

\begin{Thm}\label{T:gl-Cconv}
When $S$ is compact and strongly $\C$-convex the complex tangent hyperplanes for $S$ are in fact disjoint from the domain bounded by $S$.
\end{Thm}

\begin{proof}\,
\cite[\S2.5]{APS}.
\end{proof}

 We note that strongly $\C$-convex hypersurfaces are also strongly pseudoconvex.

A circular hypersurface (that is, a hypersurface invariant under rotations $z\mapsto e^{i\theta}z$) 
is strongly $\C$-convex if and only if it is strongly convex \cite[Prop.\,3.7]{Cer}.

The proper general context for the notion of strong $\C$-convexity is in the study of real hypersurfaces in complex projective space $\CP^n$ (see for example \cite{Bar} and \cite{APS}).

We specialize now to the two-dimensional case.

\begin{Lem}\label{L:w-def}  Let $S\subset\C^2$ be a compact strongly $\C$-convex hypersurface enclosing the origin.
Then there is a uniquely-determined map 
\begin{align*}
\mapdef{\dual\colon S}{\C^2\setminus\{0\}}{z}{w(z)=(w_1(z),w_2(z))}
\end{align*}
satisfying
\refstepcounter{equation}\label{N:w-def}
\begin{enum}
\item $z_1w_1+z_2w_2=1$ on $S$; \label{I:key-rel}
\item  the vector field 
\begin{equation*}\label{E:Ydef}
Y\eqdef w_2\vf{z_1}-w_1\vf{z_2}
\end{equation*}
 is tangent
to $S$.
Moreover, $Y$ annihilates conjugate-CR functions on any
relatively open subset of $S$. \label{I:tang(1,0)}
\end{enum}
\end{Lem}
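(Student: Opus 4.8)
The plan is to recognize $\dual$ as the affine form of the projective (Gauss) dual map and to extract an explicit formula directly from the two conditions, so that existence, uniqueness, and the stated properties all fall out together. Fix a smooth real defining function $\rho$ for $S$ and abbreviate $\rho_{z_j}=\dee\rho/\dee z_j$; because $\rho$ is real with $d\rho\neq0$ on $S$, the holomorphic gradient $(\rho_{z_1},\rho_{z_2})$ is nowhere zero there. The geometric reading of the conditions is that \itemref{N:w-def}{I:key-rel} places $z$ on the complex affine line $\{\zeta\st w_1(z)\zeta_1+w_2(z)\zeta_2=1\}$, while the tangency in \itemref{N:w-def}{I:tang(1,0)} forces the direction $\C\cdot(w_2(z),-w_1(z))$ of that line to coincide with $H_zS$; thus $w(z)$ should be the normalized coefficient vector of the complex tangent hyperplane to $S$ at $z$.

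To make this precise I would first record that for a $(1,0)$ field $V=a\,\vf{z_1}+b\,\vf{z_2}$ the reality of $\rho$ makes the conditions $V\rho=0$ and $\bar V\rho=0$ on $S$ equivalent, so $V$ is tangent to $S$ iff $V\rho=0$. Applying this to $Y$ gives $Y\rho=w_2\rho_{z_1}-w_1\rho_{z_2}=0$, i.e.\ $(w_1,w_2)$ is a scalar multiple of $(\rho_{z_1},\rho_{z_2})$; writing $w_j=\mu\,\rho_{z_j}$ and imposing \itemref{N:w-def}{I:key-rel} pins the scalar down via $\mu\,(z_1\rho_{z_1}+z_2\rho_{z_2})=1$. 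Hence, provided the Euler-type quantity $z_1\rho_{z_1}+z_2\rho_{z_2}$ is nowhere zero on $S$, the unique candidate is
\[
w_j(z)=\frac{\rho_{z_j}(z)}{z_1\rho_{z_1}(z)+z_2\rho_{z_2}(z)},\qquad j=1,2,
\]
which is smooth, nonvanishing (as $(\rho_{z_1},\rho_{z_2})\neq0$), and independent of the choice of $\rho$ (replacing $\rho$ by $h\rho$ with $h$ nonvanishing scales numerator and denominator identically on $S$).

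The one place the global hypotheses are needed — and the step I expect to carry the weight — is the nonvanishing of $z_1\rho_{z_1}+z_2\rho_{z_2}$ on $S$. I would note that this expression vanishes at $z$ exactly when the origin lies on the complex tangent hyperplane $\{\zeta\st\rho_{z_1}(z)(\zeta_1-z_1)+\rho_{z_2}(z)(\zeta_2-z_2)=0\}$. Since $S$ is compact, strongly $\C$-convex, and encloses the origin, Proposition \ref{P:gl-Cconv} shows every complex tangent hyperplane is disjoint from the bounded domain, hence avoids the origin; so the denominator is nowhere zero and the formula above is legitimate. This reduces the analytic core of the lemma entirely to the cited proposition.

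Finally I would verify the two remaining claims for this $w$. Tangency of $Y$ is exactly the identity $Y\rho=w_2\rho_{z_1}-w_1\rho_{z_2}=0$ used above. For the annihilation property, $T^{1,0}S$ has complex rank one in $\C^2$, and $Y$ is a nowhere-vanishing tangent $(1,0)$ field, so it spans $T^{1,0}S$ at each point; as the conjugate-CR functions are precisely those killed by every tangent $(1,0)$ field, $Yg=0$ for each such $g$. Uniqueness needs no further argument, since the reduction above already showed any admissible $w$ must be proportional to $(\rho_{z_1},\rho_{z_2})$ with the proportionality fixed by \itemref{N:w-def}{I:key-rel}.
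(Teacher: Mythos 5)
Your proposal is correct and follows essentially the same route as the paper: both derive the explicit formula $w_j=\rho_{z_j}/(z_1\rho_{z_1}+z_2\rho_{z_2})$ by combining the tangency condition (which forces proportionality to the holomorphic gradient) with the normalization $z_1w_1+z_2w_2=1$, and both invoke Proposition \ref{P:gl-Cconv} to rule out vanishing of the denominator. You simply spell out the steps the paper compresses into ``it is easy to check.''
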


\begin{proof} It is easy to check that \itemref{N:w-def}{I:key-rel} and \itemref{N:w-def}{I:tang(1,0)} force
\begin{align*}
w_1(z)&=\frac{\frac{\dee\rho}{\dee z_1}}{z_1\frac{\dee\rho}{\dee z_1}+z_2\frac{\dee\rho}{\dee z_2}}\\
w_2(z)&=\frac{\frac{\dee\rho}{\dee z_2}}{z_1\frac{\dee\rho}{\dee z_1}+z_2\frac{\dee\rho}{\dee z_2}}.
\end{align*}
establishing uniqueness.  Existence follows provided that the denominators do not vanish; but the vanishing of the denominators occurs precisely when the complex tangent line for $S$ at $z$ passes through the origin, and Theorem \ref{T:gl-Cconv} above guarantees that this does not occur under the given hypotheses.
\end{proof}

\begin{Remark}
It is clear from the proof that the conclusions of Lemma \ref{L:w-def} also hold under the assumption that $S$ is a (not necessarily compact) hypersurface satisfying
\begin{equation}\label{E:0spec}
\text{no complex tangent line for $S$ passes through the origin.}
\end{equation}
\end{Remark}

\begin{Remark}
Any tangential vector field annihilating  conjugate-CR functions will be a scalar multiple of $Y$.
\end{Remark}

\begin{Remark}
The complex  line tangent to $S$ at $z$ is given by
\begin{equation}\label{E:Ctan}
\{\zeta\in\C^2\colon w_1(z)\zeta_1+w_2(z)\zeta_2=1\}.
\end{equation}
\end{Remark}

\begin{Remark}\label{R:ann}
The maximal complex subspace $H_zS$ of each $T_zS$ is annihilated by the form $w_1\,dz_1+w_2\,dz_2$.
\end{Remark}

\begin{Prop}\label{P:loc-diff} For $S$ strongly $\C$-convex satisfying \eqref{E:0spec}, the map   
$\dual$ is a local diffeomorphism onto an immersed strongly $\C$-convex hypersurface $S^*$, with each maximal complex subspace $H_zS$ of $T_zS$ mapped (non-$\C$-linearly) by $\dual_z'$ onto the corresponding maximal complex subspace of $H_{w(z)}S^*$.  For  $S$ strongly $\C$-convex and compact, $S^*$ is an embedded strongly $\C$-convex hypersurface and $\dual$ is a diffeomorphism.
\end{Prop}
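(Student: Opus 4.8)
The plan is to organize the proof around the biduality symmetry encoded in the defining relation \itemref{N:w-def}{I:key-rel}, namely $z_1w_1+z_2w_2=1$, which is manifestly symmetric in $z$ and $w$. First I would compute the differential $\dual'_z$ on the complexified tangent space $\C\otimes T_zS=\Span_\C\{Y,\bar Y,T\}$, where $Y=w_2\,\vf{z_1}-w_1\,\vf{z_2}$ is the field of Lemma \ref{L:w-def}, $\bar Y$ is its conjugate, and $T$ is any vector transverse to $H_zS$ inside $T_zS$. The object of this computation is to see that $\dual'_z$ carries complex-tangential directions to complex-tangential directions of the image, non-$\C$-linearly.

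The key step uses only \itemref{N:w-def}{I:key-rel}. Applying the tangential field $Y$ to $z_1w_1+z_2w_2=1$ and using $Yz_1=w_2$, $Yz_2=-w_1$ yields $z_1\,Yw_1+z_2\,Yw_2=0$; applying $Y$ to the conjugate identity $\bar z_1\bar w_1+\bar z_2\bar w_2=1$ and using $Y\bar z_j=0$ yields $\bar z_1\,Y\bar w_1+\bar z_2\,Y\bar w_2=0$. Writing $Y^*\eqdef z_2\,\vf{w_1}-z_1\,\vf{w_2}$ for the field produced by the dual recipe, these two relations say exactly that there are scalars $\lambda,\mu$ with
\[
\dual'_z(Y)=\lambda\,Y^*+\mu\,\bar{Y^*},\qquad \dual'_z(\bar Y)=\bar\mu\,Y^*+\bar\lambda\,\bar{Y^*},
\]
the $(1,0)$-part of $\dual'_z(Y)$ being $\lambda Y^*$ and the $(0,1)$-part $\mu\bar{Y^*}$. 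Thus $\dual'_z$ maps $H_zS$ into $\Span_\C\{Y^*,\bar{Y^*}\}$, and the nonvanishing of $\mu$ is precisely the failure of $\C$-linearity. For the transverse direction I differentiate \itemref{N:w-def}{I:key-rel} along $T_zS$ to get $z_1\,dw_1+z_2\,dw_2=-(w_1\,dz_1+w_2\,dz_2)$; by Remark \ref{R:ann} the right-hand form annihilates $H_zS$, and a dimension count shows its kernel on the $3$-real-dimensional $T_zS$ is exactly the $2$-real-dimensional $H_zS$, so it is nonzero on $T$. Consequently the $(1,0)$-part of $\dual'_z(T)$ pairs nontrivially with $z$ and hence lies outside $\Span_\C\{Y^*,\bar{Y^*}\}$.

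Everything now reduces to the single nondegeneracy $|\lambda|^2\neq|\mu|^2$: under the identification of $H_zS$ with $\C$ via $Y$ and of the real $2$-plane underlying $\Span_\C\{Y^*,\bar{Y^*}\}$ with $\C$ via $Y^*$, the real-linear map $\dual'_z|_{H_zS}$ takes the form $v\mapsto\lambda v+\mu\bar v$, with real Jacobian determinant $|\lambda|^2-|\mu|^2$. This is the one place where strong $\C$-convexity, rather than mere pseudoconvexity, must enter, and it is the step I expect to be the main obstacle: I would show that $|\mu|^2-|\lambda|^2$ is a positive multiple of the second fundamental form of $S$ restricted to $H_zS$, so that positive definiteness forces $|\mu|^2>|\lambda|^2$. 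The model case $S=S^3$ (where $\rho=|z|^2-1$ gives $w=\bar z$, hence $\lambda=0$ and $\mu\neq0$, so that $\dual$ is purely conjugate-linear on $H_zS$) displays the expected behaviour. Granting $|\lambda|^2\neq|\mu|^2$, the map $\dual'_z|_{H_zS}$ is an $\R$-linear isomorphism onto that real $2$-plane, which together with the transverse computation makes $\dual'_z$ injective, so $\dual$ is an immersion onto a hypersurface $S^*$. Invertibility also lets me recover $Y^*$ as a combination of the tangent vectors $\dual'_z(Y),\dual'_z(\bar Y)$, so $Y^*$ is tangent to $S^*$; since $z$ satisfies $z_1w_1+z_2w_2=1$ as well, the uniqueness in Lemma \ref{L:w-def} applied to $S^*$ identifies $z$ as the dual point of $w(z)$, i.e. the dual construction for $S^*$ inverts $\dual$. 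This identifies $\Span_\C\{Y^*,\bar{Y^*}\}$ with $H_{w(z)}S^*$, giving the asserted non-$\C$-linear mapping of maximal complex subspaces, and the same sign computation read on $S^*$ yields its strong $\C$-convexity; alternatively one may invoke the projective invariance of $\C$-convexity together with the classical duality of strongly convex hypersurfaces \cite{APS}.

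Finally, in the compact case the immersion $\dual$ possesses the global inverse furnished by the biduality identity of the previous paragraph, hence is injective; an injective immersion of the compact manifold $S$ is an embedding, so $S^*$ is an embedded strongly $\C$-convex hypersurface and $\dual\colon S\to S^*$ is a diffeomorphism.
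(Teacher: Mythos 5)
The paper itself offers no argument here---its ``proof'' is a pointer to \cite[\S 6]{Bar} and \cite[\S2.5]{APS}---so your self-contained computation is necessarily a different route, and its local half is sound. Differentiating $z_1w_1+z_2w_2=1$ and its conjugate along $Y$ does give $\dual'_z(Y)=\lambda Y^*+\mu\bar{Y^*}$ (this is essentially the content of Lemma \ref{L:YV-rel}, with $\mu$ corresponding to the nonvanishing coefficient there), and the transversality argument for $T$ is correct. The one computation you defer does work out as you predict: writing $D=z_1\rho_{z_1}+z_2\rho_{z_2}$ one finds $\mu=L(Y,\bar Y)/\bar D$ and $\lambda=Q(Y)/D$, where $L$ is the Levi form and $Q$ the holomorphic Hessian of $\rho$; since the second fundamental form on $H_zS$ in the real direction $tY+\bar t\bar Y$ is $2\Re\bigl(t^2Q(Y)\bigr)+2|t|^2L(Y,\bar Y)$, its positive definiteness is exactly $L(Y,\bar Y)>|Q(Y)|$, i.e.\ $|\mu|>|\lambda|$. (So $|\mu|^2-|\lambda|^2$ is a positive multiple of the \emph{determinant} of that form, not of the form itself---a harmless imprecision.) Inverting the $2\times 2$ real matrix then yields strong $\C$-convexity of $S^*$ by the same criterion.

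The genuine gap is in the compact case. Injectivity of $\dual$ cannot be extracted from the biduality identity alone: $\dual_{S^*}\circ\dual=\mathrm{id}$ holds on the abstract immersed manifold, but if the image had a self-intersection in $\C^2$---two branches through the same point $w$ with different tangent planes---then ``the dual map of $S^*$'' is not a well-defined function of the point $w\in\C^2$, so the identity furnishes no global inverse and no contradiction. What is actually needed is the global statement that distinct points of a compact strongly $\C$-convex hypersurface have distinct complex tangent lines, which by \eqref{E:Ctan} is precisely injectivity of $\dual$. Proposition \ref{P:gl-Cconv} puts each tangent line in the complement of the enclosed domain, but one must still exclude a line tangent to $S$ at two different points; that requires a connectedness or degree argument of the kind carried out in \cite[\S2.5]{APS}, and is not a consequence of your local analysis. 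For the final sentence of the Proposition you should either supply that argument or cite it, as the paper does.
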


\begin{proof}\,
\cite[\S 6]{Bar}, \cite[\S2.5]{APS}.
\end{proof}

For $S$ strongly $\C$-convex satisfying \eqref{E:0spec} we may extend  $\dual$ to a smooth map on an open set in $\C^2$; the extended map $\dual^\star$ will be a local diffeomorphism in some neighborhood $U$ of $S$.  We may then define  vector fields $\vf{w_1}, \vf{w_2}, \vf{\bar{w_1}}, \vf{\bar{w_2}}$ on $U$ by applying $\left(\left(\dualx\right)\inv\right)'$ to the corresponding vector fields on $\dualx(U)$; these newly-defined vector fields will depend on the choice of the extension 
$\dualx$.

\begin{Lem}\label{L:Vdef}
The non-vanishing vector field
\begin{equation*}
V\eqdef z_2\vf{w_1}-z_1\vf{w_2}\label{E:V-def}
\end{equation*}
is tangent to $S$ and is independent of the choice of the extension $\dualx$.
\end{Lem}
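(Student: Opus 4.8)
The plan is to exploit the symmetry of the duality construction between $S$ and its dual $S^*$, reducing the tangency of $V$ to the tangency of the field $Y$ for $S^*$ already supplied by Lemma \ref{L:w-def}. First I would record the defining property of the pulled-back frame: for $z\in S$ the vector $\vf{w_j}\big|_z=\left(\left(\dualx\right)\inv\right)'\!\left(\vf{w_j}\big|_{w(z)}\right)$, so that $\left(\dualx\right)'_z$ sends $\vf{w_j}\big|_z$ to the \emph{honest} coordinate vector $\vf{w_j}\big|_{w(z)}$ in the target. Applying $\left(\dualx\right)'_z$ to $V$ therefore produces the vector
\[
A_z\eqdef z_2\,\vf{w_1}\Big|_{w(z)}-z_1\,\vf{w_2}\Big|_{w(z)}
\]
at the image point $w(z)\in S^*$, and $A_z$ manifestly depends only on $z$ and on $w(z)=\dual(z)$, not on the extension $\dualx$.

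Next I would identify $A_z$ by biduality. By Proposition \ref{P:loc-diff} the hypersurface $S^*$ is again compact and strongly $\C$-convex, and since the defining relation \itemref{N:w-def}{I:key-rel} is symmetric in $z$ and $w$ (and the complex tangent line to $S^*$ at $w(z)$, namely $\{\zeta\colon z_1\zeta_1+z_2\zeta_2=1\}$, visibly avoids the origin), Lemma \ref{L:w-def} applies verbatim to $S^*$ with the roles of $z$ and $w$ interchanged. Its conclusion, evaluated at $w(z)$ where the dual point is $z$, states precisely that $A_z$ is tangent to $S^*$. Tangency of $V$ then follows formally: $\dualx|_S=\dual\colon S\to S^*$ is a diffeomorphism (Proposition \ref{P:loc-diff}), so $\left(\dualx\right)'_z$ carries $\C T_zS$ isomorphically onto $\C T_{w(z)}S^*$; since $A_z=\left(\dualx\right)'_z(V|_z)$ lies in $\C T_{w(z)}S^*$, its preimage $V|_z$ lies in $\C T_zS$. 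Non-vanishing is immediate, as $0\notin S$ forces $(z_1,z_2)\neq 0$ while $\vf{w_1},\vf{w_2}$ are linearly independent.

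For independence of the extension I would observe that, restricted to $\C T_zS$, the map $\left(\dualx\right)'_z$ agrees with the intrinsic differential $\dual'_z$ of $\dual\colon S\to S^*$, which involves no choice of $\dualx$. Hence along $S$ we have $V|_z=\left(\dual'_z\right)\inv(A_z)$, and as both $\dual'_z$ and $A_z$ are extension-free, so is $V|_S$.

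The main obstacle is the bookkeeping around extension-dependence: the full differential $\left(\dualx\right)'_z$ genuinely depends on $\dualx$ even at points of $S$ (through its transverse derivatives), so the argument must route the independence of $V|_S$ through the two extension-free objects $\dual'_z$ and $A_z$ rather than through $\left(\dualx\right)'_z$ itself. Relatedly, one must keep in mind that the invariance is a statement about $V|_S$ only, since the relation \itemref{N:w-def}{I:key-rel} holds solely on $S$ and there is no reason for independence off $S$.
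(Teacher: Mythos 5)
There is a genuine gap at the pivotal step. Your argument reduces the tangency of $V$ to the assertion that the conclusion of Lemma \ref{L:w-def}, applied to $S^*$, "evaluated at $w(z)$ where the dual point is $z$," yields the tangency of $A_z$. But the claim that the dual point of $w(z)\in S^*$ is $z$ --- equivalently, your parenthetical "the complex tangent line to $S^*$ at $w(z)$, namely $\{\zeta\colon z_1\zeta_1+z_2\zeta_2=1\}$" --- is exactly equivalent to the statement being proved: $A_z$ is a $(1,0)$-vector in the $w$-coordinates spanning the direction $(z_2,-z_1)$, so saying $A_z\in H_{w(z)}S^*$ and saying the complex tangent line of $S^*$ at $w(z)$ is $\{z_1\zeta_1+z_2\zeta_2=1\}$ are the same assertion. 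The words "namely" and "visibly" are carrying the entire content. Within the paper's development this is moreover circular: the biduality statement $S^{**}=S$ appears only in Remark \ref{R:pd}, \emph{after} this lemma, and is there attributed to the symmetry of the formulas in Lemmas \ref{L:w-def} and \ref{L:Vdef}. Proposition \ref{P:loc-diff} as stated gives you only that $\dual'_z$ maps $H_zS$ onto $H_{w(z)}S^*$; to identify that image as the null space of $z_1\,dw_1+z_2\,dw_2$ you must differentiate $z_1w_1+z_2w_2=1$ along $S$ and invoke Remark \ref{R:ann} --- which is precisely the paper's direct proof. (The paper simply notes that on $T_zS$ one has $z_1\,dw_1+z_2\,dw_2=-(w_1\,dz_1+w_2\,dz_2)$, whose null space is $\left(H_zS\right)\otimes\C$ by Remark \ref{R:ann}, and then computes $z_1\cdot Vw_1+z_2\cdot Vw_2=z_1z_2-z_2z_1=0$.) Your route could be repaired by citing the biduality theorem from \cite[\S2.5]{APS} explicitly, but as written the key step is unjustified.

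The remaining pieces are sound: the identification $\left(\dualx\right)'_z(V)=A_z$ and the observation that $A_z$ is extension-free are correct, and your extension-independence argument via the intrinsic differential $\dual'_z$ is a legitimate alternative to the paper's check that $\tilde Vw_j=Vw_j$ and $\tilde V\bar w_j=V\bar w_j$ along $S$ (the paper's version has the mild advantage of not presupposing that tangency has already been established). The non-vanishing remark is fine.
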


\begin{proof} From \itemref{N:w-def}{I:key-rel} we have
\begin{align*}
0 &= d(z_1w_1+z_2w_2)\\
&= z_1\,dw_1+z_2\,dw_2 + w_1\,dz_1 + w_2\,dz_2
\end{align*}
on $T_zS$.

From Remark \ref{R:ann} we deduce that the null space in $T_z\C^2$  of $z_1\,dw_1+z_2\,dw_2$ is precisely the maximal complex subspace $H_z S$ of $T_z S$ 
(and moreover the null space in $\left(T_z\C^2\right)\otimes\C$ of of $z_1\,dw_1+z_2\,dw_2$ is precisely $\left(H_z S\right)\otimes\C$).  
If we apply  $z_1\,dw_1+z_2\,dw_2$  to $V$ we obtain
\begin{equation*}
z_1\cdot Vw_1+ z_2\cdot Vw_2 = z_1 \cdot z_2 - z_2 \cdot z_1=0
\end{equation*}
showing that $V$ takes values in $\left(H_z S\right)\otimes\C$ and is thus tangential.

If  an alternate tangential vector field $\tilde V$ is constructed with the use of an alternate extension $\widetilde{\dualx}$ of $\dual$, then 
\begin{align*}
\tilde Vw_j&=\pm z_{3-j}=Vw_j\\
\tilde V\bar w_j &= 0 = V\bar w_j
\end{align*}
along $S$, so $\tilde V = V$ along $S$.
\end{proof}

\begin{Def}\label{D:dCR}
A function $u$ on a relatively open subset of $S$ will be called {\em dual-CR} if $\bar Vu=0$.
\end{Def}

\begin{Ex}
If $S$ is the unit sphere in $\C^2$, then $w(z)=\bar z$ and the set of dual-CR functions on $S$ coincides with the set of conjugate-CR functions on $S$.
\end{Ex}

The set of dual-CR functions will only rarely coincide with the set of conjugate-CR functions as we see from the following two related results.

\begin{Thm} If $S$ is a compact strongly $\C$-convex hypersurface in $\C^2$, then the 
 set of dual-CR functions on $S$ will coincide with the the set of conjugate-CR functions on $S$ if and only if $S$ is a complex-affine image of the unit sphere.
\end{Thm}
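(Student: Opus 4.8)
The plan is to convert the coincidence of the two function classes into a statement about a single pair of vector fields, then identify that statement with a holomorphic extension property which ultimately forces $S$ to be a quadric. Throughout, let $\Omega$ denote the bounded domain with boundary $S$ and $\Omega^*$ the domain bounded by $S^*$. First I would reduce to proportionality: by definition every dual-CR function is annihilated by $\bar V$, while by Lemma \ref{L:w-def}\itemref{N:w-def}{I:tang(1,0)} and the Remark following it the conjugate-CR functions are exactly the solutions of $Yu=0$, with any tangential field annihilating them being a scalar multiple of $Y$. Since $\bar V$ and $Y$ are each nowhere-vanishing complex-tangential fields cutting out their kernels by a single first-order equation, the two classes coincide if and only if $\bar V$ and $Y$ are everywhere proportional, i.e. $V$ is a nonvanishing multiple of $\bar Y$.

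Next I would pin down this proportionality. Differentiating the incidence relation \itemref{N:w-def}{I:key-rel} and its conjugate along $V$ and using $Vw_1=z_2$, $Vw_2=-z_1$, $V\bar w_j=0$, one sees that the $(1,0)$- and $(0,1)$-parts (in $z$) of $V$ are proportional to $Y$ and $\bar Y$; hence $V=\mu Y+\eta\bar Y$ for smooth $\mu,\eta$ with $\eta$ nowhere zero. Feeding this into $V\bar w_j=0$ and using that $\dualx$ is a local diffeomorphism — so $\bar w_1,\bar w_2$ cannot be simultaneously annihilated by both $Y$ and $\bar Y$ — gives $\mu\equiv 0$ if and only if $Yw_1=Yw_2=0$. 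Thus the classes coincide precisely when $w_1,w_2$ are conjugate-CR, equivalently when $\bar w_1,\bar w_2$ are CR, equivalently when $\dual\colon S\to S^*$ is anti-CR.

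For the easy implication I would verify the coincidence directly for a complex-affine image of the sphere. Writing such an $S$ as a Hermitian quadric $\{(z-a)^\ast H(z-a)=1\}$, a short computation presents $\bar w_1,\bar w_2$ as ratios of holomorphic functions regular on $\bar\Omega$ (for the sphere itself $w=\bar z$, so $\bar w_j=z_j$); hence $w_1,w_2$ are conjugate-CR. Alternatively one checks covariance of the construction of $w$ under complex-linear maps and reduces to the sphere, handling a translation by the explicit computation.

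The hard implication is the converse, and this is where I expect the main obstacle. Assuming $w_1,w_2$ conjugate-CR, strong pseudoconvexity and compactness let the CR functions $\bar w_1,\bar w_2$ extend to holomorphic $G_1,G_2$ on $\Omega$, smooth up to $S$, and since $\bar w=G$ on $S$ and $\dual$ is a diffeomorphism onto $S^*$ the map $G=(G_1,G_2)$ should be a biholomorphism $\Omega\to\bar{\Omega^*}$ with boundary incidence $\bar z_1G_1+\bar z_2G_2=1$ on $S$. For fixed $z'\in S$ the line $\{\xi:\bar z_1'\xi_1+\bar z_2'\xi_2=1\}$ is exactly the complex tangent line to $\bar{S^*}$ at $G(z')$ (via biduality and \eqref{E:Ctan}), so by Proposition \ref{P:gl-Cconv} applied to $\bar{S^*}$ it meets $\bar{\Omega^*}$ only at $G(z')$; hence $\bar z'\cdot G(z)-1$ vanishes on $\bar\Omega$ only at $z=z'$. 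More generally, for $\zeta^0\in\C^2$ the zero set of $\zeta^0\cdot G(z)-1$ is the $G$-preimage of the section of $\bar{\Omega^*}$ by the line $\{\xi:\zeta^0\cdot\xi=1\}$, and I would argue that these preimages are complex lines, so that $G$ carries lines to lines and is therefore the restriction of a projective-linear transformation; a compact strongly $\C$-convex hypersurface projectively equivalent to the sphere is then complex-affinely equivalent to it. The genuine difficulty lies in this last step — upgrading the incidence and tangent-line information into flatness of the level sets $\{\zeta^0\cdot G=1\}$ — which is where I expect to need either real-analyticity of $S$ together with a Segre-variety argument, or the projective-duality rigidity results of \cite{Bar}.
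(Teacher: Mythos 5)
Note first that the paper itself does not prove this theorem: it is stated as a known result with the proofs attributed to \cite{Jen}, \cite{DeTr}, and \cite{Bol}. Your opening reduction is nevertheless correct and is the right first move. Writing $V=\chi Y+\sigma\bar Y$ and $Y=\kappa V+\xi\bar V$ as in Lemma \ref{L:YV-rel}, the dual-CR class ($\ker\bar V$) coincides with the conjugate-CR class ($\ker Y$) exactly when $\chi\equiv 0$; substituting one relation into the other gives $\kappa\sigma+\xi\bar\chi=0$, so this is equivalent to $\kappa\equiv 0$, hence to $Yw_1=\kappa z_2=0$ and $Yw_2=-\kappa z_1=0$, i.e.\ to $\bar w_1,\bar w_2$ being CR (equivalently, $\dual$ being anti-CR). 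The easy direction, checking this for complex-affine images of the sphere, also goes through as you describe.

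The genuine gap is in the converse, and you have located it but not filled it. Even granting that $\bar w_1,\bar w_2$ extend to a holomorphic map $G$ on $\Omega$ with $\bar z_1G_1+\bar z_2G_2=1$ on $S$ and that $G$ is a biholomorphism onto the (conjugated) dual domain, nothing in your outline forces the level sets $\{z:\zeta^0\cdot G(z)=1\}$ to be complex lines: the incidence and tangency information only identifies each such set as the $G$-preimage of a planar section of the dual domain, hence as some analytic curve, and ``curve'' does not upgrade to ``line'' by a formal argument. That upgrade is precisely the rigidity content of the theorem --- it is the main theorem of Detraz--Tr\'epreau \cite{DeTr} (see also \cite{Jen} and \cite{Bol}), whose proofs use genuinely different machinery (Chern--Moser-type umbilicity, projective deformation theory, or M\"obius geometry of hypersurfaces). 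Since your proposed way to close this step is to invoke those references, the argument as written reduces the theorem to the very results the paper cites for it rather than supplying an independent proof.
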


\begin{Thm} If $S$ is a  strongly $\C$-convex hypersurface in $\C^2$, then the 
 set of dual-CR functions on $S$ will coincide with the the set of conjugate-CR functions on $S$ if and only if $S$ is locally the image of a relatively open subset of the unit sphere by a projective transformation.
\end{Thm}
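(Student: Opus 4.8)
The plan is to collapse the equality of the two function classes into a single proportionality of vector fields, to convert that proportionality into the statement that $\bar w_1,\bar w_2$ are CR on $S$, and finally to recognize this condition --- via CR extension --- as saying that $S$ is cut out by a defining function affine-linear in $z$, i.e.\ that the Segre varieties of $S$ are complex lines, the defining feature of a projective quadric.

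First I would reformulate. Since $Y$ annihilates the conjugate-CR functions and $\bar V$ annihilates the dual-CR functions, the two classes coincide precisely when $\bar V$ is a tangential field annihilating the conjugate-CR functions, which by the Remark following Lemma \ref{L:w-def} forces $\bar V=\lambda Y$ for a nowhere-vanishing $\lambda$. Because $\bar V=\bar z_2\,\vf{\bar w_1}-\bar z_1\,\vf{\bar w_2}$ involves only the fields $\vf{\bar w_j}$, we have $\bar V w_k=0$; evaluating $\bar V=\lambda Y$ on $w_1,w_2$ then gives $Yw_1=Yw_2=0$. Conversely, if $Yw_1=Yw_2=0$ then the $\bar Y$-component of $\bar V$ must vanish (otherwise $\bar Y w_1=\bar Y w_2=0$, contradicting that $\dual$ is a local diffeomorphism, Proposition \ref{P:loc-diff}), so $\bar V\propto Y$. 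Thus the coincidence is equivalent to $w_1,w_2$ being conjugate-CR on $S$, i.e.\ to $\bar w_1,\bar w_2$ being CR on $S$; this holds for the sphere, where $w=\bar z$ and $\bar w=z$ is holomorphic.

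Next, assuming $\bar w_1,\bar w_2$ are CR, I would extend. Strong $\C$-convexity implies strong pseudoconvexity, so the Hans Lewy extension used in Theorem \ref{T:loc-prob} produces holomorphic $h_1,h_2$ on a one-sided neighborhood with $h_k\rest_S=\bar w_k$, and $\dual$ becomes the restriction of the anti-holomorphic map $\bar h$. The key relation \itemref{N:w-def}{I:key-rel} then reads $z_1\overline{h_1(z)}+z_2\overline{h_2(z)}=1$ on $S$, so near each point $S$ is the zero set of the real-analytic function $\rho:=z_1\overline{h_1(z)}+z_2\overline{h_2(z)}-1$, which is affine-linear in $z$ with anti-holomorphic coefficients and satisfies $\dee\rho=w_1\,dz_1+w_2\,dz_2\neq0$ (so that $\rho$, up to a nonvanishing factor, is a genuine defining function and $S$ is real-analytic). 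Consequently every Segre variety $\{z:z_1\overline{h_1(\zeta)}+z_2\overline{h_2(\zeta)}=1\}$ is a complex affine line.

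The hard part will be upgrading ``affine-linear in $z$'' to ``bilinear,'' i.e.\ showing $S$ is a genuine quadric. Here I would polarize $\rho$ in $z$ and $\bar z$; since $S$ is real, $\rho$ coincides up to a nonvanishing factor with $\bar\rho$, so $\bar\rho=\kappa\rho$. In polarized variables $\bar\rho$ is affine-linear in $\bar z$, and dividing by the linear form $\rho$ (which vanishes on each Segre line) and comparing degrees forces $h_1,h_2$ to be affine-linear; then $\rho$ is a Hermitian quadric, $S$ is locally a real quadric hypersurface, and strong $\C$-convexity makes the quadric projectively equivalent to the sphere, completing the ``only if'' direction. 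For the ``if'' direction I would argue by naturality: both the CR structure and --- by the projective naturality of duality, cf.\ \cite{Bar} --- the dual-CR structure are preserved under projective transformations, so the coincidence, which holds for the sphere by the Example above, is inherited by every local projective image of it (equivalently, a projective map sends the sphere's Segre lines to the Segre lines of $S$, returning us to the condition of the previous paragraph).
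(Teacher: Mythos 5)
First, a point of reference: the paper does not prove this theorem itself but defers to \cite{Jen}, \cite{DeTr}, and \cite{Bol}, so your attempt is competing with the actual content of those papers. Your opening reduction is correct and genuinely useful: by the Remark following Lemma \ref{L:w-def} and the rules \eqref{E:diff-rules} (namely $\bar Vw_j=0$, $\bar Yw_1=\bar\xi z_2$, $\bar Yw_2=-\bar\xi z_1$ with $\xi$ nowhere zero), the coincidence of the two function classes is equivalent to $\bar V$ being a nonvanishing multiple of $Y$, hence to $Yw_1=Yw_2=0$, i.e.\ to $\bar w_1,\bar w_2$ being CR on $S$. The ``if'' direction via projective naturality of both the CR and dual-CR structures is also fine.

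The ``only if'' endgame is where the proof breaks down. (i) Real-analyticity of $S$ is asserted but not established: $h_1,h_2$ are holomorphic only on a one-sided neighborhood $W$ and smooth up to $S$, so $\rho=z_1\bar h_1+z_2\bar h_2-1$ is real-analytic only on $W$; knowing that $S$ lies in its zero set with $\dee\rho\neq 0$ gives nothing beyond the smoothness you already had, and a reflection-principle argument would be needed here. (ii) The step ``dividing by the linear form and comparing degrees forces $h_1,h_2$ to be affine-linear'' is not an argument, and its stated conclusion is false. If $h$ were affine-linear then $\rho$ would be a bidegree-$(1,1)$ Hermitian polynomial with positive-definite $(1,1)$ part and $S$ would be an \emph{affine} image of the sphere --- which is the conclusion of the compact theorem, not the local one. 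A local projective but non-affine image of the sphere, e.g.\ a piece of $\Im z_2=|z_1|^2$, satisfies dual-CR $=$ conjugate-CR by your own ``if'' direction, yet its $h$ is fractional-linear and the $(1,1)$ part of its quadratic defining polynomial is degenerate, so it is not an affine image of the sphere; your argument would therefore prove something false. Any correct argument must conclude only that $h$ is fractional-linear, equivalently that $S$ is a piece of a nondegenerate Hermitian quadric in $\CP^2$, and passing from ``all Segre varieties are complex lines'' to that statement is precisely the nontrivial content supplied by \cite{Jen}, \cite{DeTr}, and \cite{Bol}.
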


For proofs of these results see \cite{Jen}, \cite{DeTr}, and \cite{Bol}.

\begin{Remark}
The constructions of the vector fields $Y$ and $V$ transform naturally under complex-affine mapping of $S$.  The construction of the dual-CR structure transforms naturally under projective transformation of $S$. (See for example \cite[\S 6]{Bar}.)
\end{Remark}

\begin{Lem}\label{L:YV-rel}
Relations of the form
\begin{align*}
V&= \chi Y + \sigma \bar Y\\
Y&= \kappa V + \xi \bar V
\end{align*}
hold along $S$ with $\sigma$ and $\xi$ nowhere vanishing.
\end{Lem}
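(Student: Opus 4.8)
The plan is to work entirely inside the rank-two complexified bundle $(H_z S)\otimes\C$ and to read off the four coefficients as the entries of the change-of-frame matrix relating the standard complex structure to the dual one on the complex tangent line.

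First I would fix the two frames. By Lemma \ref{L:w-def} and the remark following it, the $(1,0)$ tangential field $Y$ spans $H^{1,0}_z S$, so $\{Y,\bar Y\}$ is a frame for $(H_z S)\otimes\C = H^{1,0}_z S \oplus H^{0,1}_z S$. The proof of Lemma \ref{L:Vdef} already shows that $V$ takes values in $(H_z S)\otimes\C$; hence there are smooth functions $\chi,\sigma$ with $V = \chi Y + \sigma\bar Y$, and conjugating gives $\bar V = \bar\sigma Y + \bar\chi\bar Y$. This is the first displayed relation. Moreover $V$ is of type $(1,0)$ and $\bar V$ of type $(0,1)$ for the dual almost complex structure $J^*$ (the pullback via $\dualx$ of the standard structure of the target), so they lie in distinct eigenspaces of $J^*$ and are linearly independent; thus $\{V,\bar V\}$ is likewise a frame for $(H_z S)\otimes\C$. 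Inverting the $2\times 2$ system, whose determinant is $|\chi|^2 - |\sigma|^2 \ne 0$, produces the second relation with $\kappa = \bar\chi/(|\chi|^2 - |\sigma|^2)$ and $\xi = -\sigma/(|\chi|^2 - |\sigma|^2)$.

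It then remains only to show that $\sigma$ nowhere vanishes, for then $\xi = -\sigma/(|\chi|^2-|\sigma|^2)$ is nowhere vanishing as well. From $V = \chi Y + \sigma\bar Y$ one has $\sigma(z)=0$ precisely when $V(z)\in\C\,Y(z)=H^{1,0}_z S$, i.e. precisely when $V(z)$ is of standard type $(1,0)$. But $V$ is always of type $(1,0)$ for $J^*$, and on the real two-dimensional space $H_z S$ a (conjugation-compatible) complex structure is determined by its $(1,0)$-line; so $\sigma(z)=0$ would force the two lines $\C V(z)$ and $\C Y(z)$ to coincide, hence $J^*=J$ on $H_z S$, i.e. $\dual_z'$ would be $\C$-linear on $H_z S$. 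This contradicts Proposition \ref{P:loc-diff}, which asserts that $\dual_z'$ carries $H_z S$ onto $H_{w(z)}S^*$ non-$\C$-linearly at every $z$. Therefore $\sigma$, and with it $\xi$, never vanishes.

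The only genuine obstacle is this last non-vanishing. A head-on computation of $\sigma$ would require inverting the Jacobian of $\dualx$ and is unpleasant, so the efficient route is the frame-theoretic one above, in which $\sigma\ne 0$ (respectively $\xi\ne 0$) is read exactly as the statement that the dual $(1,0)$-line $\C V$ never coincides with the standard one $\C Y$ — precisely the non-$\C$-linearity recorded in Proposition \ref{P:loc-diff}. As a consistency check, for the unit sphere $w=\bar z$ the map is conjugate-linear, so $J^* = -J$, forcing $\chi\equiv 0$ while $\sigma$ and $\xi$ stay nonzero; this is why the lemma asserts nothing about $\chi$ or $\kappa$.
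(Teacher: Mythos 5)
Your argument is correct and follows essentially the same route as the paper's (very terse) proof, which cites exactly the three facts you develop: that $V,\bar V,Y,\bar Y$ all take values in the two-dimensional space $\left(H_zS\right)\otimes\C$, that each pair is $\C$-linearly independent, and that the non-vanishing of $\sigma$ and $\xi$ comes from the non-$\C$-linearity of $\dual_z'$ asserted in Proposition \ref{P:loc-diff}. Your reading of $\sigma(z)\ne 0$ as the statement that the $J^*$-$(1,0)$ line $\C V(z)$ never coincides with $\C Y(z)$ is precisely the intended use of that third fact.
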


\begin{proof}
This follows from the following facts:
\begin{itemize}
\item $V, \bar V, Y$ and $\bar Y$ all take values in the two-dimensional space $\left(H_z S\right)\otimes\C$;
\item $V$ and $\bar V$ are $\C$-linearly independent, as are $Y$ and $\bar Y$;
\item the non-$\C$-linearity of the map $\dual'_z\colon \left(H_z S\right)\otimes\C\to \left(H_z S^*\right)\otimes\C$ (see Proposition \ref{P:loc-diff}). 
\end{itemize}
 
\end{proof}

\begin{Lem}
If $f_1, f_2$ are CR functions and $g_1, g_2$ are dual-CR functions on a connected relatively open subset $W$ of $S$ with $f_1+g_1=f_2+g_2$, then $g_2-g_1=f_1-f_2$ is constant.
\end{Lem}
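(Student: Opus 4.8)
The plan is to show that the common value $h\eqdef f_1-f_2=g_2-g_1$ (the equality being immediate from the hypothesis $f_1+g_1=f_2+g_2$) is simultaneously annihilated by $Y$ and $\bar Y$, and then to invoke strong pseudoconvexity to upgrade this to annihilation by \emph{every} tangential direction, forcing $h$ to be constant on the connected set $W$.

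First I would record the two first-order conditions satisfied by $h$. Since CR functions form a complex vector space, $h=f_1-f_2$ is CR; conjugating the relation $Y\bar f=0$ valid for conjugate-CR functions (Lemma \ref{L:w-def}) shows a CR function is annihilated by $\bar Y$, so $\bar Y h=0$. Likewise $h=g_2-g_1$ is dual-CR, so $\bar V h=0$ by definition. Next I would feed these into Lemma \ref{L:YV-rel}: conjugating $V=\chi Y+\sigma\bar Y$ gives $\bar V=\bar\chi\,\bar Y+\bar\sigma\,Y$ with $\bar\sigma$ nowhere vanishing, and applying both sides to $h$ while using $\bar Y h=0=\bar V h$ yields $\bar\sigma\,Yh=0$, hence $Yh=0$. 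Thus $h$ is killed by both $Y$ and $\bar Y$, i.e.\ by the entire complex tangent space $(H_zS)\otimes\C$ at each point.

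The final step, carrying the real geometric content, is to pass from annihilation along $H_zS$ to annihilation in the remaining (characteristic) tangential direction. Here I would use that $S$ is strongly pseudoconvex (a consequence of strong $\C$-convexity, as noted above): the Levi form is nondegenerate, so the commutator $[Y,\bar Y]$ has a nonvanishing component transverse to $(H_zS)\otimes\C$ inside $(T_zS)\otimes\C$. Since $Yh=\bar Y h=0$ forces $[Y,\bar Y]h=Y(\bar Y h)-\bar Y(Yh)=0$, and the $Y$- and $\bar Y$-components of $[Y,\bar Y]$ already annihilate $h$, the transverse component must annihilate $h$ as well. Combined with $Yh=\bar Y h=0$ this shows every tangential vector field kills $h$, so $dh=0$ along $S$ and $h$ is constant on the connected set $W$.

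The main obstacle is making the bracket argument precise. One must verify that the transverse component of $[Y,\bar Y]$ is genuinely nonzero; since $Y$ is a nowhere-vanishing rescaling $\phi L$ of a standard $(1,0)$ CR field $L$, expanding $[\phi L,\bar\phi\bar L]$ shows its transverse component equals $\phi\bar\phi$ times that of $[L,\bar L]$, which is nonzero by nondegeneracy of the Levi form. Once this is checked the conclusion is immediate.
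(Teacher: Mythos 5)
Your argument is correct and follows the paper's proof in essence: both derive $\bar Y h=0$ and $\bar V h=0$ from the CR/dual-CR hypotheses, use Lemma \ref{L:YV-rel} to conclude $h$ is annihilated by all of $(H_zS)\otimes\C$, and then apply one Lie bracket together with strong pseudoconvexity (nondegeneracy of the Levi form) to kill the remaining tangential direction. Your write-up merely makes explicit the bracket/Levi-form step that the paper compresses into ``applying one Lie bracket.''
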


\begin{proof}
From Lemma \ref{L:YV-rel} we deduce that the directional derivatives of $g_2-g_1=f_1-f_2$ vanish in every direction belonging to the maximal complex subspace of $TS$.  Applying one Lie bracket we find that in fact all directional derivatives along $S$  of $g_2-g_1=f_1-f_2$ vanish.
\end{proof}

\begin{Cor}\label{C:sc}
If $W$ is a simply-connected relatively open subset of $S$ and $u$ is a function on $W$ that is locally decomposable as the sum of a CR function and a dual-CR function, then $u$ is decomposable on all of $W$ as the sum of a CR function and a dual-CR function.
\end{Cor}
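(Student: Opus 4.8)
The plan is to run a standard \v{C}ech-type gluing argument, using the preceding lemma to manufacture a locally constant obstruction cocycle and then annihilating that cocycle via the simple connectivity of $W$.

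First I would fix a good open cover $\{U_\alpha\}$ of $W$ by connected sets with connected (indeed contractible) finite intersections; on each $U_\alpha$ the hypothesis furnishes a local decomposition $u=f_\alpha+g_\alpha$ with $f_\alpha$ CR and $g_\alpha$ dual-CR. On any nonempty overlap $U_\alpha\cap U_\beta$ the two representations of $u$ agree, so $f_\alpha+g_\alpha=f_\beta+g_\beta$ there; since the overlap is connected the preceding lemma applies and shows that $c_{\alpha\beta}\eqdef f_\alpha-f_\beta=g_\beta-g_\alpha$ is a constant.

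Next I would observe that the constants $c_{\alpha\beta}$ satisfy the cocycle identity $c_{\alpha\beta}+c_{\beta\gamma}+c_{\gamma\alpha}=0$ on triple overlaps, so $(c_{\alpha\beta})$ represents a class in the degree-one cohomology of $W$ with constant coefficients. Because $W$ is simply connected this group vanishes, so the cocycle is a coboundary: there exist constants $c_\alpha$ with $c_{\alpha\beta}=c_\alpha-c_\beta$. I would then set $f\eqdef f_\alpha-c_\alpha$ and $g\eqdef g_\alpha+c_\alpha$ on $U_\alpha$. On overlaps these definitions agree by the coboundary relation, so $f$ and $g$ are globally well-defined on $W$; and since a constant is annihilated by every vector field---hence is simultaneously CR and dual-CR---subtracting $c_\alpha$ keeps $f$ CR and adding $c_\alpha$ keeps $g$ dual-CR. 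Finally $f+g=f_\alpha+g_\alpha=u$ on each $U_\alpha$, giving the desired global decomposition.

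The routine bookkeeping (choosing the cover, verifying the cocycle identity) is straightforward; the one genuinely essential ingredient is the cohomological vanishing furnished by simple connectivity, together with the elementary but crucial remark that constants lie in both function classes, so that the correction terms $c_\alpha$ can be absorbed without disturbing the CR and dual-CR conditions. The main obstacle, such as it is, is simply to set up the \v{C}ech formalism cleanly---in particular to arrange connected overlaps so that the preceding lemma yields a single constant $c_{\alpha\beta}$ on each $U_\alpha\cap U_\beta$ rather than a locally constant function.
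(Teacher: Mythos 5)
Your argument is correct and is exactly the intended derivation: the paper states this as an immediate corollary of the preceding uniqueness lemma, leaving to the reader precisely the patching you carry out, namely that the overlap constants form a \v{C}ech $1$-cocycle killed by simple connectivity, with constants being both CR and dual-CR so the corrections preserve each class. Nothing further is needed.
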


\subsection{Circular hypersurfaces in $\C^2$} \label{S:cir-proj}

We begin the section with an expanded restatement of  the main theorem in the projective setting.

\begin{MThm-rev}
\label{T:cir-proj-rev}
Let $S\subset\C^2$ be a  strongly ($\C$-)convex circular hypersurface.  Then there exist
scalar functions  $\phi$ and $\psi$ on $S$ so that the vector fields
\begin{subequations}\label{E:aa}
\begin{align}
X&= V+\phi \bar{V}\\
T&=  Y+\psi \bar{Y}
\end{align}
\end{subequations}
satisfy the following conditions.
\refstepcounter{equation}\label{N:XT-cond}
\begin{enum}
\item  If $u$ is a smooth function on a relatively open subset of $S$, then $u$ is CR  if and only if $Xu=0$; equivalently, $X$ is a non-vanishing scalar multiple $\alpha \bar Y$ of $\bar Y$.\label{I:bb}
\item If $u$ is a smooth function on a relatively open subset of $S$, then $u$ is dual-CR  if and only if $Tu=0$; equivalently, $T$ is a non-vanishing scalar multiple $\beta \bar V$ of  $\bar V$.\label{I:cc}
\item If $S$ is compact, then a smooth function $u$ on $S$ is the sum of a CR function and a dual-CR function if and only if $XXTu=0$. \label{I:cir-proj-glo}
\item If $S$ is simply-connected (but not necessarily compact), then a smooth function $u$ on  $S$ is  the sum of a CR function and a dual-CR function if and only if $XXTu=0=TTXu$. \label{I:cir-proj-loc}
\end{enum}
\end{MThm-rev}

As we shall see the vector field $X$ in Theorem \ref{T:cir-proj} will also work as the vector field $X$ in Theorem \ref{T:cir-plh}.

\begin{Ex} (Compare \cite{Aud}.)
The function $\frac{z_1}{w_2}$ satisfies $XXT\frac{z_1}{w_2}=0$ but is not globally defined.  Since $TTX\frac{z_1}{w_2}=2\ne 0$ this function is not locally the sum of a CR function and a dual-CR function.
\end{Ex}

Conditions \eqref{E:aa}, \itemref{N:XT-cond}{I:bb} and \itemref{N:XT-cond}{I:cc} uniquely determine $X$ and $T$.  See \S \ref{S:uniq} for some discussion of what can happen without condition \eqref{E:aa}.

\section{Proof of Theorem \ref{T:cir-proj}}\label{S:pfs}

To prove Theorem \ref{T:cir-proj} we start by consulting Lemma \ref{L:YV-rel}
and note that \eqref{E:aa}, \itemref{N:XT-cond}{I:bb} and \itemref{N:XT-cond}{I:cc} will hold if we set 
\begin{align*}
\alpha&=1/\bar\xi,&
 \beta&=1/\bar\sigma,\\
  \phi&=\bar\kappa/\bar\xi,&
   \psi&=\bar\chi/\bar\sigma;
\end{align*}
it remains to check \itemref{N:XT-cond}{I:cir-proj-glo} and \itemref{N:XT-cond}{I:cir-proj-loc}.

We note for future reference and the reader's convenience that
\begin{align}
Xw_1&=z_2&
Xw_2&=-z_1\notag\\
\bar Yw_1&=\bar\xi z_2&
\bar Yw_2&=-\bar\xi z_1\notag\\
X\bar{w}_1 &= \phi \bar{z}_2 & X \bar{w}_2&=-\phi \bar{z}_1\notag\\
Xz_1&=\bar Yz_1=0&
Xz_2&=\bar Yz_2=0\notag\\
X\bar z_1 &= \alpha \bar w_2&
X\bar z_2 &= -\alpha \bar w_1\notag\\
Tz_1&=w_2&
Tz_2 &= -w_1\label{E:diff-rules}\\
\bar V z_1 &= \bar\sigma w_2&
\bar V z_2 &= -\bar\sigma w_1\notag\\
T \bar{z}_1 &= \psi \bar{w}_2 & T\bar{z}_2 &= -\psi \bar{w}_1 \notag\\
Tw_1&=\bar Vw_1=0&
Tw_2&=\bar Vw_2=0\notag\\
T\bar w_1 &= \beta z_2&
T\bar w_2 &= -\beta z_1.\notag
\end{align}

\begin{Lem}\label{L:YVbrack}
\begin{align*}
[Y,\bar Y] &=  \bar\xi  \left(z_1 \vf{ z_1}+  z_2 \vf{ z_2}\right)-\xi \left(\bar z_1 \vf{\bar z_1} +  \bar z_2 \vf{\bar z_2}\right)
  \\
[V,\bar V] &= \bar\sigma  \left(w_1 \vf{ w_1}+  w_2 \vf{ w_2}\right)
-\sigma \left(\bar w_1 \vf{\bar w_1} +  \bar w_2 \vf{\bar w_2}\right).
 \end{align*}
\end{Lem}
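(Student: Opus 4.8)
The plan is to compute both brackets directly as first-order operators, exploiting the symmetry between the pair $(Y,\bar Y)$ in the $z$-coordinates and the pair $(V,\bar V)$ in the $w$-coordinates, and to reduce everything to the differentiation rules already recorded in \eqref{E:diff-rules}.

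First I would treat $[Y,\bar Y]$. Writing out $Y\bar Y-\bar Y Y$ as a differential operator and using that $Y$ differentiates only in the $z$-directions while $\bar Y$ differentiates only in the $\bar z$-directions, the purely second-order part of the bracket is a sum of coefficients times the commutators $\left[\vf{z_i},\vf{\bar z_j}\right]$, which vanish for the ordinary Wirtinger operators. Hence the second-order terms cancel and one is left with
\[
[Y,\bar Y]=(Y\bar w_2)\vf{\bar z_1}-(Y\bar w_1)\vf{\bar z_2}-(\bar Y w_2)\vf{z_1}+(\bar Y w_1)\vf{z_2}.
\]
The coefficients $\bar Y w_1=\bar\xi z_2$ and $\bar Y w_2=-\bar\xi z_1$ are read off from \eqref{E:diff-rules}, and the remaining two come from the conjugation identity $Y\bar f=\overline{\bar Y f}$ (valid because $\bar Y$ is the complex conjugate vector field of $Y$), which gives $Y\bar w_1=\xi\bar z_2$ and $Y\bar w_2=-\xi\bar z_1$. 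Substituting these four values and grouping the terms into the $z$-radial and $\bar z$-radial pieces produces exactly the claimed formula.

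The bracket $[V,\bar V]$ is then obtained by the identical argument under the formal interchange of $z$ with $w$, of $Y$ with $V$, and of $\xi$ with $\sigma$: the same expansion yields
\[
[V,\bar V]=(V\bar z_2)\vf{\bar w_1}-(V\bar z_1)\vf{\bar w_2}-(\bar V z_2)\vf{w_1}+(\bar V z_1)\vf{w_2},
\]
and the four coefficients are supplied by $\bar V z_1=\bar\sigma w_2$, $\bar V z_2=-\bar\sigma w_1$ from \eqref{E:diff-rules} together with their conjugates $V\bar z_1=\sigma\bar w_2$, $V\bar z_2=-\sigma\bar w_1$.

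The only step needing genuine care — and the expected obstacle — is justifying the cancellation of the second-order terms in $[V,\bar V]$. Here the operators $\vf{w_1},\vf{w_2},\vf{\bar w_1},\vf{\bar w_2}$ are not honest Wirtinger operators but the pushforwards under $\left(\left(\dualx\right)\inv\right)'$ of the coordinate fields on $\dualx(U)$, so I must verify that they still pairwise commute. This follows from the naturality of the Lie bracket under the local diffeomorphism $\dualx$ (Proposition \ref{P:loc-diff}): the bracket of any two of these fields is the pushforward of the bracket of the corresponding commuting coordinate fields on $\dualx(U)$, and therefore vanishes. With this in hand both computations are routine, and I would present them in parallel, noting that the identities are asserted along $S$, where the differentiation rules \eqref{E:diff-rules} hold.
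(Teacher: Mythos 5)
Your proposal is correct and follows essentially the same route as the paper: the paper's proof consists precisely of the first-order expansion $[Y,\bar Y]=(Y\bar w_2)\vf{\bar z_1}-(Y\bar w_1)\vf{\bar z_2}-(\bar Y w_2)\vf{z_1}+(\bar Y w_1)\vf{z_2}$ followed by substitution from \eqref{E:diff-rules}, with the $[V,\bar V]$ case declared ``similar.'' Your added justification that the pushed-forward fields $\vf{w_j},\vf{\bar w_j}$ still pairwise commute (by naturality of the Lie bracket under $\dualx$) is a point the paper leaves implicit, and is handled correctly.
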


\begin{proof}
The first statement follows from
\begin{equation*}
[Y,\bar Y] = \left(Y\bar w_2\right)\vf{\bar z_1}-\left(Y\bar w_1\right)\vf{\bar z_2}
-\left(\bar Y w_2\right)\vf{ z_1}+\left(\bar Y w_1\right)\vf{ z_2}
\end{equation*}
along with \eqref{E:diff-rules}.  

The proof of the second statement is similar.
\end{proof}

We note that the assumption that $S$ is circular has not been used so far in this section.  We now bring it into play by introducing the real tangential vector field
\begin{equation*}
R  \eqdef i \left( z_1 \vf{z_1}+ z_2 \vf{z_2} - 
 \bar z_1 \vf{\bar z_1}- \bar z_2 \vf{\bar z_2}\right)
\end{equation*}
generating the rotations of $z\mapsto e^{i\theta}z$ of $S$.

\begin{Lem} The following hold.
\refstepcounter{equation}\label{N:rot-lem1}
\begin{enum}
\item $\bar\xi=\xi$ \label{I:xir}
\item $\bar\sigma=\sigma$ \label{I:sigmar}
\item $\bar\alpha=\alpha$ \label{I:alphar}
\item $\bar\beta=\beta$ \label{I:betar}
\item $R=-i \left( w_1 \vf{w_1}+ w_2 \vf{w_2} - 
 \bar w_1 \vf{\bar w_1}- \bar w_2 \vf{\bar w_2}\right)$ \label{I:wrot}
 \item $[Y,\bar Y]=-i\xi R$ \label{I:YbarY}
 \item $[V,\bar V]=i\sigma R$ \label{I:VbarV}
 \item $[X,Y]=iR-(Y\alpha)\bar Y$ \label{I:XY*}
\end{enum}
\end{Lem}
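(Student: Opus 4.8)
The engine driving every part is the circular symmetry, which forces $w$ to be homogeneous of degree $-1$: since $z\mapsto e^{i\theta}z$ preserves $S$ and the normalization $z_1w_1+z_2w_2=1$ of Lemma \ref{L:w-def}, we must have $w(e^{i\theta}z)=e^{-i\theta}w(z)$ along $S$. The field $R$ generates exactly the flow $z\mapsto e^{i\theta}z$, so differentiating this homogeneity at $\theta=0$ gives $Rw_j=-iw_j$ and $R\bar w_j=i\bar w_j$ along $S$. Because $R$ is tangent to $S$ and $(w_1,w_2,\bar w_1,\bar w_2)$ serve as local coordinates near $S$ via the local diffeomorphism $\dualx$, expanding $R$ in the corresponding coordinate fields, with coefficients $Rw_j=-iw_j$ and $R\bar w_j=i\bar w_j$, yields \itemref{N:rot-lem1}{I:wrot} at once. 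The same homogeneity shows that the dual hypersurface $S^\star$ is again circular and encloses the origin.

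The heart of the matter is the reality statements \itemref{N:rot-lem1}{I:xir} and \itemref{N:rot-lem1}{I:sigmar}, and the plan is to extract them from the tangency of the brackets in Lemma \ref{L:YVbrack}. Introduce the radial (Euler) field
\begin{equation*}
E\eqdef z_1\vf{z_1}+z_2\vf{z_2}+\bar z_1\vf{\bar z_1}+\bar z_2\vf{\bar z_2},
\end{equation*}
which is transverse to $S$ since $S$ is star-shaped about the origin, and record
\begin{equation*}
z_1\vf{z_1}+z_2\vf{z_2}=\tfrac12\left(E-iR\right),\qquad
\bar z_1\vf{\bar z_1}+\bar z_2\vf{\bar z_2}=\tfrac12\left(E+iR\right).
\end{equation*}
Substituting into Lemma \ref{L:YVbrack} presents $[Y,\bar Y]$ as $\tfrac12(\bar\xi-\xi)\,E-\tfrac{i}{2}(\bar\xi+\xi)\,R$. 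But $[Y,\bar Y]$ is tangent to $S$ while $E$ is transverse, so the $E$-component must vanish, forcing $\bar\xi=\xi$ and leaving $[Y,\bar Y]=-i\xi R$; this is \itemref{N:rot-lem1}{I:xir} and \itemref{N:rot-lem1}{I:YbarY} simultaneously. Running the identical argument in the dual $w$-picture---using \itemref{N:rot-lem1}{I:wrot} to identify the $w$-rotation field with $-R$, and the $w$-radial field (transverse to the circular $S^\star$) in place of $E$---turns the second line of Lemma \ref{L:YVbrack} into $\bar\sigma=\sigma$ together with $[V,\bar V]=i\sigma R$, giving \itemref{N:rot-lem1}{I:sigmar} and \itemref{N:rot-lem1}{I:VbarV}. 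Since $\alpha=1/\bar\xi$ and $\beta=1/\bar\sigma$ are now reciprocals of real quantities, \itemref{N:rot-lem1}{I:alphar} and \itemref{N:rot-lem1}{I:betar} follow for free.

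Finally, \itemref{N:rot-lem1}{I:XY*} is a short computation built on what precedes. Using $X=\alpha\bar Y$ (Theorem \ref{T:cir-proj}\,\itemref{N:XT-cond}{I:bb}) and the Leibniz rule $[fU,W]=f[U,W]-(Wf)U$,
\begin{equation*}
[X,Y]=[\alpha\bar Y,Y]=\alpha[\bar Y,Y]-(Y\alpha)\bar Y=i\,\alpha\xi\,R-(Y\alpha)\bar Y,
\end{equation*}
and since $\bar\xi=\xi$ gives $\alpha\xi=1$, this collapses to $iR-(Y\alpha)\bar Y$.

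The step I expect to require the most care is the transversality input for \itemref{N:rot-lem1}{I:sigmar}: I must be certain that the $w$-radial field really is transverse to $S$, which rests on $S^\star$ being a circular strongly $\C$-convex hypersurface enclosing the origin (Proposition \ref{P:loc-diff} together with the homogeneity above) and on the $\vf{w_j},\vf{\bar w_j}$ being genuine coordinate fields near $S$ via $\dualx$. One must also keep in mind that all of these identities are asserted only along $S$, so that the extension-dependent fields $\vf{w_j}$ may be used freely without their ambiguity affecting the conclusions.
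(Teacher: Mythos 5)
Your proof is correct, and its overall architecture matches the paper's: both rest on Lemma \ref{L:YVbrack}, the homogeneity $w(e^{i\theta}z)=e^{-i\theta}w(z)$ for \itemref{N:rot-lem1}{I:wrot}, the dualization of the $\xi$-argument for $\sigma$, and the formulas $\alpha=1/\bar\xi$, $\beta=1/\bar\sigma$ for the remaining reality statements; your computation of \itemref{N:rot-lem1}{I:XY*} via $[\alpha\bar Y,Y]=\alpha[\bar Y,Y]-(Y\alpha)\bar Y$ and $\alpha\xi=1$ is exactly what the paper's terse ``From \itemref{N:XT-cond}{I:bb} and \itemref{N:rot-lem1}{I:YbarY}'' amounts to. The one genuine divergence is the mechanism forcing $\bar\xi=\xi$: the paper observes that $[Y,\bar Y]+i\xi R=(\bar\xi-\xi)\left(z_1\vf{z_1}+z_2\vf{z_2}\right)$ is tangential, so if $\bar\xi\ne\xi$ somewhere then $z_1\vf{z_1}+z_2\vf{z_2}$ is a non-vanishing holomorphic tangential field on an open subset of $S$, contradicting strong pseudoconvexity; you instead split off the real Euler field $E$ and kill its coefficient by transversality. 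Your route is pointwise rather than open-set-based, which is a small gain in cleanliness; its only cost is the transversality input you rightly flag. For that, ``star-shaped'' suffices in the compact strongly convex case, but the justification that works in exactly the generality needed is the one already built into the setup: if $E$ were tangent at $z$ then, since $R$ is tangent by circularity, $z_1\vf{z_1}+z_2\vf{z_2}=\tfrac12(E-iR)$ would lie in $H_zS\otimes\C$, so the complex tangent line at $z$ would pass through the origin, contradicting \eqref{E:0spec}; the same remark applied to $S^*$ (circular by the homogeneity of $w$, with complex tangent lines $\{\omega\st z_1\omega_1+z_2\omega_2=1\}$ visibly missing the origin) secures the transversality you need for \itemref{N:rot-lem1}{I:sigmar}.
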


\begin{proof}
We start by considering the tangential vector field
\begin{equation*}
[Y,\bar Y]+i\xi R = (\bar \xi - \xi)\left( z_1 \vf{z_1}+ z_2 \vf{z_2}\right);
\end{equation*}
if \itemref{N:rot-lem1}{I:xir} fails, then $z_1 \vf{z_1}+ z_2 \vf{z_2}$ is a non-vanishing holomorphic tangential vector field on some non-empty relatively open subset of $S$, contradicting the strong pseudoconvexity of $S$.

To prove \itemref{N:rot-lem1}{I:wrot} we first note from Lemma \ref{L:w-def} that $w\left(e^{i\theta}z\right)=e^{-i\theta} w(z)$; differentiation with respect to $\theta$ yields \itemref{N:rot-lem1}{I:wrot}.

The proof of \itemref{N:rot-lem1}{I:xir} now may be adapted to prove \itemref{N:rot-lem1}{I:sigmar}.
\itemref{N:rot-lem1}{I:alphar} and \itemref{N:rot-lem1}{I:betar} follow immediately.

 Using Lemma \ref{L:YVbrack} in combination with \itemref{N:rot-lem1}{I:xir} and \itemref{N:rot-lem1}{I:sigmar} we obtain \itemref{N:rot-lem1}{I:YbarY} and \itemref{N:rot-lem1}{I:VbarV}.
 
 From \itemref{N:XT-cond}{I:bb} and \itemref{N:rot-lem1}{I:YbarY} we obtain \itemref{N:rot-lem1}{I:XY}.
\end{proof}

\begin{Lem}\label{L:XTbrack}
$[X,T]=iR$.
\end{Lem}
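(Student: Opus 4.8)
The plan is to compute $[X,T]$ in two complementary ways and then exploit the pointwise $\C$-linear independence of $X$ and $T$, thereby sidestepping any direct verification of scalar identities. Throughout I use \itemref{N:XT-cond}{I:bb} and \itemref{N:XT-cond}{I:cc} to write $X=\alpha\bar Y$ and $T=\beta\bar V$, where by \itemref{N:rot-lem1}{I:xir}--\itemref{N:rot-lem1}{I:betar} the scalars $\alpha=1/\xi$ and $\beta=1/\sigma$ are real and nowhere vanishing.

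First I would use the decomposition $T=Y+\psi\bar Y$ from \itemref{N:XT-cond}{I:aa}. Expanding $[X,T]=[X,Y]+(X\psi)\bar Y+\psi[X,\bar Y]$, substituting $[X,Y]=iR-(Y\alpha)\bar Y$ from \itemref{N:rot-lem1}{I:XY*}, and using $[X,\bar Y]=[\alpha\bar Y,\bar Y]=-(\bar Y\alpha)\bar Y$, I obtain
\begin{equation*}
[X,T]=iR+c\,\bar Y
\end{equation*}
for a scalar $c$ whose precise value I will not need. Since $\bar Y=\alpha\inv X$, this exhibits $[X,T]-iR$ as a pointwise scalar multiple of $X$.

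Next I would run the mirror-image computation on the dual side using $X=V+\phi\bar V$. Starting from $[T,V]=[\beta\bar V,V]=-\beta[V,\bar V]-(V\beta)\bar V$ and inserting $[V,\bar V]=i\sigma R$ from \itemref{N:rot-lem1}{I:VbarV} together with $\beta\sigma=1$ gives $[T,V]=-iR-(V\beta)\bar V$. Expanding $[X,T]=[V,T]+\phi[\bar V,T]-(T\phi)\bar V$ and using $[\bar V,T]=(\bar V\beta)\bar V$ then yields
\begin{equation*}
[X,T]=iR+c'\,\bar V
\end{equation*}
for a scalar $c'$, so that $[X,T]-iR$ is also a pointwise scalar multiple of $T=\beta\bar V$.

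It remains to conclude by linear independence. From Lemma \ref{L:YV-rel} we have $Y=\kappa V+\xi\bar V$ with $\xi$ nowhere vanishing; conjugating and recalling that $V$ and $\bar V$ are $\C$-linearly independent shows that $\bar Y$ is not a scalar multiple of $\bar V$, whence $X=\alpha\bar Y$ and $T=\beta\bar V$ are pointwise $\C$-linearly independent. A vector field that is simultaneously a multiple of $X$ and a multiple of $T$ must vanish, forcing $c\,\bar Y=c'\,\bar V=0$ and hence $[X,T]=iR$. The one load-bearing point — and the reason the two-sided computation pays off — is that attacking the bracket directly would require proving the scalar identity $X\psi-Y\alpha-\psi\,\bar Y\alpha=0$; the symmetry between the $Y$-picture and the $V$-picture lets me avoid it entirely.
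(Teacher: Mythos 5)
Your proposal is correct and follows essentially the same route as the paper: compute $[X,T]$ once in the $Y$-picture (getting $iR$ plus a multiple of $\bar Y$) and once in the $V$-picture (getting $iR$ plus a multiple of $\bar V$), then invoke the linear independence of $\bar Y$ and $\bar V$ to kill both error terms. The only differences are cosmetic — you route the first computation through the already-recorded bracket $[X,Y]=iR-(Y\alpha)\bar Y$ rather than expanding $[\alpha\bar Y,\,Y+\psi\bar Y]$ directly, and you supply the (omitted in the paper) justification that $\bar Y$ and $\bar V$ are independent via Lemma \ref{L:YV-rel}.
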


\begin{proof}
On the one hand, 
\begin{align*}
[X,T] &= [V+\phi\bar V,\beta\bar V]\\
&=\left( (V+\phi\bar V)\beta-\beta(\bar V\phi)
\right)\bar V + i \beta\sigma R\\
&= \left( (V+\phi\bar V)\beta-\beta(\bar V\phi)
\right)\bar V + i  R.
\end{align*}
On the other hand,
\begin{align*}
[X,T] &= [\alpha \bar Y,Y + \psi\bar Y]\\
&=\left( \alpha(\bar Y\psi)-(Y+\psi\bar Y)\alpha
\right)\bar Y + i \alpha \xi R\\
&=\left( \alpha(\bar Y\psi)-(Y+\psi\bar Y)\alpha
\right)\bar Y + i  R.
\end{align*}
Since $\bar V$ and $\bar Y$ are linearly independent, it follows that $[X,T]= i R$.
\end{proof}

\begin{Lem} The following hold.
\refstepcounter{equation}\label{N:Rbrack}
\begin{enum}
\item $[R,Y]=-2iY$ \label{I:RY}
\item $[R,\bar Y]=2i\bar Y$ \label{I:RbY}
\item $[R,V]=2iV$ \label{I:RV}
\item $[R,\bar V]=-2i\bar V$ \label{I:RbV}
\item $[R,X]=2iX$ \label{I:RX}
\item $[R,\bar X]=-2i\bar X$ \label{I:RbX}
\item $[R,T]=-2iT$ \label{I:RT}
\item $[R,\bar T]=2i\bar T$ \label{I:RbT}
\item $R\alpha=0$ \label{I:Ralpha}
\item $R\beta=0$ \label{I:Rbeta}
\end{enum}
\end{Lem}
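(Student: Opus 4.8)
The plan is to prove the ten bracket and derivative identities in \itemref{N:Rbrack}{I:RY}--\itemref{N:Rbrack}{I:Rbeta} by exploiting the homogeneity of all the relevant objects under the rotation flow generated by $R$. The key observation is that $R$ generates the circle action $z\mapsto e^{i\theta}z$, under which $z_j$ has weight $+1$, $\bar z_j$ has weight $-1$, and (by \itemref{N:rot-lem1}{I:wrot} and the relation $w(e^{i\theta}z)=e^{-i\theta}w(z)$ noted in the previous lemma) $w_j$ has weight $-1$ and $\bar w_j$ has weight $+1$. Concretely, for any smooth function $h$ the bracket $[R,Z]$ acting as a derivation measures the failure of the coefficient functions of a vector field $Z$ to be eigenfunctions of $R$, so the cleanest route is to compute $R$ applied to each coefficient appearing in $Y, \bar Y, V, \bar V$ and to the scalars $\alpha,\beta$.

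First I would establish the two scalar statements \itemref{N:Rbrack}{I:Ralpha} and \itemref{N:Rbrack}{I:Rbeta}, since these feed into the vector-field computations. By \itemref{N:rot-lem1}{I:alphar} and \itemref{N:rot-lem1}{I:betar} we already know $\alpha,\beta$ are real; combined with the weight bookkeeping this should force $R\alpha=R\beta=0$, i.e. $\alpha$ and $\beta$ are rotation-invariant. The fastest argument is to apply $R$ to the defining relations $Xw_1=z_2$ and $\bar Yw_1=\bar\xi z_2=\xi z_2$ together with $X=\alpha\bar Y$ from \itemref{N:XT-cond}{I:bb}: differentiating $\alpha\bar Y w_1 = Xw_1 = z_2$ along $R$ and using the known weights of $z_2$ and of $\bar Y$ isolates $R\alpha$. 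An analogous computation using $T=\beta\bar V$ and the rules for $Tw_1, \bar Vw_1$ in \eqref{E:diff-rules} gives $R\beta=0$.

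Next I would handle the vector-field brackets. For $[R,Y]$ I would write $Y=w_2\,\partial/\partial z_1-w_1\,\partial/\partial z_2$ and use the Leibniz-type formula $[R,Y]=(Rw_2)\,\partial/\partial z_1-(Rw_1)\,\partial/\partial z_2 - (\text{terms from } [R,\partial/\partial z_j])$. Since $R$ is itself $\partial/\partial z_j$-homogeneous of weight $0$ but the coordinate vector fields $\partial/\partial z_j$ carry weight $-1$, the combination yields $[R,Y]=-2iY$: the factor $-2i$ arises as the sum of the weight $-1$ of $w_2$ and the weight $-1$ of $\partial/\partial z_1$ (in the $i\,$-normalized convention built into $R$). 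The statements \itemref{N:Rbrack}{I:RbY}, \itemref{N:Rbrack}{I:RV}, \itemref{N:Rbrack}{I:RbV} follow identically from the weights of $\bar w, \bar z$ and of $w, z$; here \itemref{N:rot-lem1}{I:wrot} is exactly what guarantees the $w$-coordinate vector fields transform with the expected weights. Then \itemref{N:Rbrack}{I:RX} and \itemref{N:Rbrack}{I:RT} follow from $X=\alpha\bar Y$, $T=\beta\bar V$, the Leibniz rule $[R,X]=(R\alpha)\bar Y+\alpha[R,\bar Y]$, and the just-proved facts $R\alpha=R\beta=0$, $[R,\bar Y]=2i\bar Y$, $[R,\bar V]=-2i\bar V$. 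Finally \itemref{N:Rbrack}{I:RbX} and \itemref{N:Rbrack}{I:RbT} come from conjugating, using that $R$ is a real vector field.

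The only real subtlety I anticipate is keeping the signs and the factor-of-$i$ normalization consistent: because $R$ is defined with the explicit factor $i$, the weight of a monomial of $z$-degree $p$ and $\bar z$-degree $q$ is $i(p-q)$ rather than $(p-q)$, and one must track this uniformly through the $w$-variables via \itemref{N:rot-lem1}{I:wrot}. The homogeneity of $\alpha$ and $\beta$ is the linchpin that decouples \itemref{N:Rbrack}{I:RX}, \itemref{N:Rbrack}{I:RbX}, \itemref{N:Rbrack}{I:RT}, \itemref{N:Rbrack}{I:RbT} from the remaining identities, so I would present \itemref{N:Rbrack}{I:Ralpha} and \itemref{N:Rbrack}{I:Rbeta} first even though they are listed last.
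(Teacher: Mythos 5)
Your argument is correct, but it inverts the paper's logical order for the second half of the lemma, and the two routes are genuinely different. Both you and the paper prove \itemref{N:Rbrack}{I:RY}--\itemref{N:Rbrack}{I:RbV} by the same direct weight computation (with \itemref{N:rot-lem1}{I:wrot} supplying the weights of the $w$-coordinates). After that, however, the paper does \emph{not} establish $R\alpha=R\beta=0$ first; instead it exploits the two representations of $T$: writing $T=\beta\bar V$ and using \itemref{N:Rbrack}{I:RbV} shows that $[R,T]$ is some scalar multiple of $T$ (no knowledge of $R\beta$ needed), while writing $T=Y+\psi\bar Y$ and using \itemref{N:Rbrack}{I:RY} gives $[R,T]=-2iY+(\text{multiple of }\bar Y)$, and comparing $Y$-components via \itemref{N:XT-cond}{I:aa} pins the scalar to $-2i$; items \itemref{N:Rbrack}{I:Ralpha} and \itemref{N:Rbrack}{I:Rbeta} are then extracted \emph{afterwards} from $[R,X]=(R\alpha)\bar Y+\alpha[R,\bar Y]=(R\alpha)\bar Y+2iX$. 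Your route instead derives $R\alpha=0$ directly by applying $R$ to the structure relation $\alpha\bar Y w_j=\pm z_{3-j}$ and then obtains \itemref{N:Rbrack}{I:RX}--\itemref{N:Rbrack}{I:RbT} by the Leibniz rule; this is valid and makes the homogeneity mechanism more transparent, but it costs two small points of care. First, $\bar Y w_1=\bar\xi z_2$ vanishes on $\{z_2=0\}$, so you must use both relations (the one for $w_1$ and the one for $w_2$) to conclude $R\alpha=0$ on all of $S$ (recall $z\neq 0$ on $S$ since $z_1w_1+z_2w_2=1$ there). Second, your computation of $R\alpha$ already invokes $[R,\bar Y]=2i\bar Y$ and \itemref{N:rot-lem1}{I:wrot}, so despite your announced presentation order, \itemref{N:Rbrack}{I:Ralpha} and \itemref{N:Rbrack}{I:Rbeta} must logically follow \itemref{N:Rbrack}{I:RY}--\itemref{N:Rbrack}{I:RbV} rather than precede everything. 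In exchange, the paper's detour through the double representation of $X$ and $T$ (which leans on condition \itemref{N:XT-cond}{I:aa}) avoids any separate computation of $R\alpha$, $R\beta$, while your version gives a uniform ``every object has a rotation weight'' narrative from which all ten identities fall out mechanically.
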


\begin{proof}
\itemref{N:Rbrack}{I:RY}, \itemref{N:Rbrack}{I:RbY}, \itemref{N:Rbrack}{I:RV} and \itemref{N:Rbrack}{I:RbV} follow from direct calculation.

For \itemref{N:Rbrack}{I:RT}  first note that writing $T=\beta\bar V$ and using \itemref{N:Rbrack}{I:RbV} we see that $[R,T]$ is a scalar multiple of $T$.  Then writing
\begin{equation*}
[R,T] = [R,Y+\psi\bar Y]= -2iY + (\text{multiple of }\bar Y)
\end{equation*}
we conclude using \eqref{E:aa} that $[R,T]=-2iT$.  The proof of \itemref{N:Rbrack}{I:RX} is similar, and \itemref{N:Rbrack}{I:RbX} and \itemref{N:Rbrack}{I:RbT}  follow by conjugation.

Using \itemref{N:XT-cond}{I:bb} along with \itemref{N:Rbrack}{I:RbY} and \itemref{N:Rbrack}{I:RX} we obtain \itemref{N:Rbrack}{I:Ralpha}; \itemref{N:Rbrack}{I:Rbeta} is proved similarly.
\end{proof}

\begin{Lem}\label{L:XXker}
$XXf=0$ if and only if $f=f_1w_1+f_2w_2$ with $f_1, f_2$ CR.
\end{Lem}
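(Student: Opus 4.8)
The plan is to reduce everything to two facts already available: by \itemref{N:XT-cond}{I:bb} the operator $X$ annihilates exactly the CR functions ($Xu=0\iff u$ is CR), and $X$ is a first-order derivation, so sums and products of CR functions are again CR. The entries of \eqref{E:diff-rules} that I will use are $Xz_1=Xz_2=0$, $Xw_1=z_2$, $Xw_2=-z_1$, together with the key relation \itemref{N:w-def}{I:key-rel}, namely $z_1w_1+z_2w_2=1$.

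For the ``if'' direction, suppose $f=f_1w_1+f_2w_2$ with $f_1,f_2$ CR. Since $Xf_1=Xf_2=0$, the Leibniz rule gives $Xf=f_1(Xw_1)+f_2(Xw_2)=f_1z_2-f_2z_1$. As $z_1,z_2$ are CR and $f_1,f_2$ are CR, this is a sum of products of CR functions, hence CR; therefore $XXf=0$.

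For the ``only if'' direction I would reconstruct $f_1,f_2$ explicitly from $f$. Set $g\eqdef Xf$; the hypothesis $XXf=0$ says precisely that $g$ is CR. If a decomposition $f=f_1w_1+f_2w_2$ with $f_1,f_2$ CR exists, the computation above forces $f_1z_2-f_2z_1=g$, so $f_1,f_2$ must solve the linear system $f_1w_1+f_2w_2=f$, $f_1z_2-f_2z_1=g$. Its coefficient matrix has determinant $-(z_1w_1+z_2w_2)=-1$ by the key relation, hence is everywhere invertible, and Cramer's rule produces the candidates
\[
f_1\eqdef z_1f+w_2g,\qquad f_2\eqdef z_2f-w_1g.
\]
It then remains to check two things. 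First, $f_1w_1+f_2w_2=(z_1w_1+z_2w_2)f+(w_1w_2-w_1w_2)g=f$, again by the key relation. Second, $f_1$ and $f_2$ are CR: applying $X$ and using $Xz_j=0$, $Xf=g$, $Xg=0$, $Xw_1=z_2$, $Xw_2=-z_1$ yields $Xf_1=z_1g-z_1g=0$ and $Xf_2=z_2g-z_2g=0$.

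The argument has no serious obstacle beyond guessing the right formulas, and the single point requiring insight is that the linear system relating $(f,g)$ to $(f_1,f_2)$ is invertible. This is exactly where the normalization $z_1w_1+z_2w_2=1$ built into the dual map $\dual$ does the work, forcing the determinant to be the unit $-1$; everything else is a one-line application of the product rule together with the differentiation table \eqref{E:diff-rules}.
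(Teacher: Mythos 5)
Your proof is correct and follows essentially the same route as the paper: the paper's own argument uses exactly the formulas $f_1=z_1f+w_2Xf$ and $f_2=z_2f-w_1Xf$ and verifies the same identities via \eqref{E:diff-rules} and the relation $z_1w_1+z_2w_2=1$. The Cramer's-rule motivation you supply is a nice way to see where these formulas come from, but it does not change the substance of the argument.
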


\begin{proof}
From \itemref{N:XT-cond}{I:bb} and \eqref{E:diff-rules} it is clear that $XX\left(f_1w_1+f_2w_2\right)=0$ if $f_1$ and $ f_2$  are CR.

For the other direction, suppose that $XXf=0$. Then setting 
\begin{align*}
f_1&\eqdef z_1f+w_2Xf\\
f_2&\eqdef z_2f-w_1Xf.
\end{align*}
 it is clear that $f=f_1w_1+f_2w_2$; with the use of \itemref{N:XT-cond}{I:bb} and \eqref{E:diff-rules} it is also easy to check that $f_1$ and $f_2$ are CR.
\end{proof}

\begin{Lem}\label{L:proj-1eq}
Suppose that $XXTu=0$ so that by Lemma \ref{L:XXker} we may write $Tu=f_1w_1+f_2w_2$ with $f_1, f_2$ CR.  Then 
\begin{equation}\label{E:proj-2op}
TTXu = \frac{\dee f_1}{\dee z_1} + \frac{\dee f_2}{\dee z_2}.
\end{equation}
In particular, $TTXu$ is CR.
\end{Lem}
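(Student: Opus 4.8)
The plan is to compute $TTXu$ directly, stripping off one vector field at a time and using the bracket relations $[X,T]=iR$ (Lemma~\ref{L:XTbrack}) and $[R,T]=-2iT$ (\itemref{N:Rbrack}{I:RT}) to slide $X$ leftward until it acts on $Tu=f_1w_1+f_2w_2$, where the differentiation rules \eqref{E:diff-rules} and the hypothesis that $f_1,f_2$ are CR become available. Throughout I would use that a CR function $f$ satisfies $Xf=0$ (\itemref{N:XT-cond}{I:bb}), hence $\bar Yf=0$ and therefore $Tf=Yf$ since $T=Y+\psi\bar Y$.

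First I would commute $X$ past the inner $T$: from $[X,T]=iR$ we get $TXu=X(Tu)-iRu$, and since $Xf_j=0$ while $Xw_1=z_2$, $Xw_2=-z_1$ by \eqref{E:diff-rules}, this gives $X(Tu)=f_1z_2-f_2z_1$, so
\[
TXu = f_1z_2 - f_2z_1 - iRu.
\]

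Next I would apply the outer $T$, using the product rule with $Tz_1=w_2$, $Tz_2=-w_1$ from \eqref{E:diff-rules} and $Tf_j=Yf_j$, and rewriting $T(Ru)=R(Tu)+2iTu$ by means of $[R,T]=-2iT$. Expanding $R(Tu)$ with $Rw_j=-iw_j$ (\itemref{N:rot-lem1}{I:wrot}), the terms proportional to $Tu$ cancel and leave
\[
TTXu = (Yf_1)z_2 - (Yf_2)z_1 - i(Rf_1)w_1 - i(Rf_2)w_2.
\]

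Finally I would pass to the standard derivatives. Each $f_j$ is CR, hence (by strong pseudoconvexity) the restriction of a holomorphic function, so the antiholomorphic part of $R$ annihilates $f_j$ and $Rf_j=iEf_j$, where $E\eqdef z_1\frac{\dee}{\dee z_1}+z_2\frac{\dee}{\dee z_2}$; thus $-i(Rf_j)=Ef_j$. Inverting the pair $Y=w_2\frac{\dee}{\dee z_1}-w_1\frac{\dee}{\dee z_2}$ and $E$, whose coefficient determinant equals $z_1w_1+z_2w_2=1$ by \itemref{N:w-def}{I:key-rel}, gives $\frac{\dee}{\dee z_1}=z_2Y+w_1E$ and $\frac{\dee}{\dee z_2}=-z_1Y+w_2E$; grouping the four terms above accordingly collapses them exactly to $\frac{\dee f_1}{\dee z_1}+\frac{\dee f_2}{\dee z_2}$, which is CR since holomorphic derivatives of holomorphic functions are again holomorphic. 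The hard part will be the commutator bookkeeping in the middle step --- keeping the order of $X$, $T$, $R$ straight and the signs correct while moving $X$ to the left and replacing $T(Ru)$ --- and confirming that the several copies of $Tu$ produced along the way cancel; once that is done the final collapse is forced by the key relation \itemref{N:w-def}{I:key-rel}.
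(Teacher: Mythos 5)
Your proposal is correct and follows essentially the same route as the paper's proof: commute $X$ leftward using $[X,T]=iR$ and $[R,T]=-2iT$, apply the differentiation rules \eqref{E:diff-rules} together with $Tf_j=Yf_j$ and $Rw_j=-iw_j$, observe the cancellation of the $f_jw_j$ terms, and collapse the remaining tangential combination to $\frac{\dee f_1}{\dee z_1}+\frac{\dee f_2}{\dee z_2}$ via $z_1w_1+z_2w_2=1$. Your reading of the non-tangential derivatives through CR extension versus the paper's rewriting in terms of $Y$ and $R$ are the two equivalent interpretations the paper itself mentions just after the lemma statement.
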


The non-tangential derivatives appearing in \eqref{E:proj-2op} may be interpreted using the Hans Lewy local CR extension result previously mentioned in the proof of Theorem \ref{T:loc-prob}, or else by rewriting them in terms of tangential derivatives (as in the last step of the proof below).

\begin{proof} We have
\begin{align*}
TTXu
&= TXTu + T[T,X]u \\
&= TX\left(f_1w_1+f_2w_2\right)-iTRu && \text{(Lemma \ref{L:XTbrack})}\\
&= T\left(f_1z_2-f_2z_1\right) - iRTu - i[T,R]u && \text{\itemref{N:XT-cond}{I:bb}, \eqref{E:diff-rules}  }\\
&= T\left(f_1z_2-f_2z_1\right) - iR\left(f_1w_1+f_2w_2\right) + 2Tu
&& \text{ \itemref{N:Rbrack}{I:RT}  }\\
&= \left(Tf_1\right)z_2 - f_1w_1 - \left(Tf_2\right)z_1 - f_2w_2 \\
&\qquad
-i\left(Rf_1\right)w_1 - f_2w_2 - i\left(Rf_2\right)w_2 - f_2 w_2 \\
&\qquad + 2\left(f_1w_1+f_2w_2\right)
&& \text{\eqref{E:diff-rules}, \itemref{N:rot-lem1}{I:wrot}  }\\
&= \left(z_2 T - i w_1 R\right) f_2 - \left( z_1 T + iw_2 R\right) f_2\\
&= \left(z_2 Y - i w_1 R\right) f_2 - \left( z_1 Y + iw_2 R\right) f_2\\
&= \frac{\dee f_1}{\dee z_1} + \frac{\dee f_2}{\dee z_2}.
\end{align*}
\end{proof}

\begin{Lem}\label{L:spec-2ord} The following hold.
\refstepcounter{equation}\label{N:spec-2ord}
\begin{enum}
\item The operator $XT$ maps CR functions to CR functions. \label{I:XT}
\item The operator $XY$ maps CR functions to CR functions. \label{I:XY}
\item The operator $TX$ maps dual-CR functions to dual-CR functions. \label{I:TX}
\item The operator $\bar{XY}$ maps conjugate-CR functions to conjugate-CR functions. \label{I:bar-XY}
\end{enum}
\end{Lem}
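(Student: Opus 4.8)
The plan is to exploit the commutator relations established above, reducing each statement to the vanishing of a suitable second-order expression. Recall that a function $f$ is CR precisely when $Xf=0$ (by \itemref{N:XT-cond}{I:bb}) and dual-CR precisely when $Tf=0$ (by \itemref{N:XT-cond}{I:cc}); moreover $X=\alpha\bar Y$ and $T=\beta\bar V$ with $\alpha,\beta$ nowhere vanishing. Thus to prove \itemref{N:spec-2ord}{I:XT} it suffices to show $XXTf=0$ whenever $Xf=0$, and similarly for the remaining parts. The engine is Lemma \ref{L:XTbrack} ($[X,T]=iR$), the bracket \itemref{N:rot-lem1}{I:XY*} ($[X,Y]=iR-(Y\alpha)\bar Y$), and the eigenvector-type relations \itemref{N:Rbrack}{I:RX} ($[R,X]=2iX$) and \itemref{N:Rbrack}{I:RT} ($[R,T]=-2iT$).

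For \itemref{N:spec-2ord}{I:XT}, take $f$ CR, so $Xf=0$. Then
\begin{equation*}
XTf=[X,T]f+TXf=iRf,
\end{equation*}
and applying $X$ once more gives $XXTf=iXRf=0$, since $XRf=[X,R]f+RXf=-2iXf=0$ by $[X,R]=-2iX$ and $Xf=0$. Hence $XTf$ is CR. The proof of \itemref{N:spec-2ord}{I:XY} is nearly identical: because $X=\alpha\bar Y$ with $\alpha$ nowhere vanishing, $Xf=0$ forces $\bar Yf=0$, so the term $(Y\alpha)\bar Yf$ in \itemref{N:rot-lem1}{I:XY*} drops out and $XYf=[X,Y]f=iRf$; then $XXYf=iXRf=0$ exactly as before.

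Statement \itemref{N:spec-2ord}{I:TX} is the mirror image obtained by interchanging the roles of $X$ and $T$: for $g$ dual-CR we have $Tg=0$, so $TXg=[T,X]g+XTg=-iRg$ and $TTXg=-iTRg=-i\bigl([T,R]g+RTg\bigr)=0$, now using $[T,R]=2iT$ from \itemref{N:Rbrack}{I:RT}. Finally, \itemref{N:spec-2ord}{I:bar-XY} follows from \itemref{N:spec-2ord}{I:XY} by conjugation: a function $u$ is conjugate-CR if and only if $\bar u$ is CR, and $\bar{XY}\,u=\overline{XY\bar u}$, which is conjugate-CR because $XY\bar u$ is CR by \itemref{N:spec-2ord}{I:XY}. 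I do not anticipate a genuine obstacle here; the one point requiring care is the passage from $Xf=0$ to $\bar Yf=0$ in \itemref{N:spec-2ord}{I:XY}, which is precisely what licenses discarding the $(Y\alpha)\bar Y$ term, together with the bookkeeping of signs in the $R$-brackets.
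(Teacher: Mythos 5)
Your argument is correct. The paper's own proof is shorter and more computational: for $u$ CR it notes $Tu=Yu$ (since $\bar Yu=\alpha\inv Xu=0$) and then computes $XTu=XYu=-z_1\frac{\dee u}{\dee z_1}-z_2\frac{\dee u}{\dee z_2}$ directly from the differentiation rules \eqref{E:diff-rules}, observing that this expression is visibly CR because the holomorphic derivatives of a CR function are again CR. Since $-z_1\frac{\dee u}{\dee z_1}-z_2\frac{\dee u}{\dee z_2}=iRu$ for $u$ CR, your intermediate identity $XTf=iRf$ is literally the same formula; the two proofs differ only in how they certify that this output is CR. The paper reads it off from the explicit form, which requires interpreting the non-tangential derivatives $\frac{\dee u}{\dee z_j}$ via the local CR extension, whereas you verify $X(iRf)=0$ intrinsically from $[R,X]=2iX$ and never leave $S$. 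Your route leans on Lemma \ref{L:XTbrack}, \itemref{N:rot-lem1}{I:XY*}, and the $R$-bracket relations instead of \eqref{E:diff-rules}, which makes the mirror symmetry with \itemref{N:spec-2ord}{I:TX} transparent and isolates the one genuinely delicate point --- that $Xf=0$ forces $\bar Yf=0$ (as $\alpha$ is nowhere vanishing), which is what kills the $(Y\alpha)\bar Y$ term; the paper uses the same fact implicitly when it replaces $Tu$ by $Yu$. Your conjugation argument for \itemref{N:spec-2ord}{I:bar-XY} matches the paper's ``the other proofs are similar.''
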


\begin{proof}
To prove \itemref{N:spec-2ord}{I:XT} and \itemref{N:spec-2ord}{I:XY} note that for $u$ CR we have $XTu=XYu=-z_1\frac{\dee u}{\dee z_1}-z_2\frac{\dee u}{\dee z_2}$  which is also CR.  The other proofs are similar.
\end{proof}

\begin{proof}[Proof of \itemref{N:XT-cond}{I:cir-proj-loc}] To get the required lower bound on the null spaces, it will suffice to show that $XXT$ and $TTX$ annihilate CR functions and dual-CR functions.  This follows from \itemref{N:XT-cond}{I:bb} and \itemref{N:XT-cond}{I:cc}  along with \itemref{N:spec-2ord}{I:XT} and \itemref{N:spec-2ord}{I:TX}.

For the other direction, if $XXTu=0=TTXu$, then from Lemma \ref{L:proj-1eq} we have a 
closed 1-form $\omega\eqdef f_2\,dz_1 - f_1\,dz_2$ on $S$ where $f_1$ and $f_2$ are CR functions satisfying $Tu=f_1w_1+f_2w_2$.  Since $S$ is simply-connected we may write $\omega=df$ with $f$ CR.  Then from \eqref{E:aa} we have
\begin{align*}
Tf &= Yf\\
&=w_2f_2+w_1f_1\\
&= Tu.
\end{align*}
Thus $u$ is the sum of the CR function $f$ and the dual-CR function $u-f$.
\end{proof}

To set up the proof of the global result \itemref{N:XT-cond}{I:cir-proj-glo} we introduce
the form
\begin{equation}\label{E:nu-def}
\nu\eqdef (z_2\,dz_1-z_1\,dz_2)\w dw_1\w dw_2
\end{equation}
and the $\C$-bilinear pairing
\begin{equation}\label{E:pair-def}
\lma \mu,\eta\rma \eqdef \intl_S \mu\eta\cdot\nu
\end{equation}
between functions on $S$ (but see Technical Remark \ref{R:tech} below).

\begin{Lem}\label{L:parts}
$\lma T\gamma, \eta \rma=-\lma \gamma, T\eta \rma$. 
\end{Lem}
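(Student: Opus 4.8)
The plan is to prove this as an integration-by-parts identity over $S$. By the Leibniz rule for the vector field $T$ we have $T(\gamma\eta)=(T\gamma)\eta+\gamma(T\eta)$, so
\[
\lma T\gamma,\eta\rma+\lma\gamma,T\eta\rma=\intl_S T(\gamma\eta)\cdot\nu .
\]
Thus it suffices to show that $\intl_S (Tf)\cdot\nu=0$ for every function $f$ on $S$ (then take $f=\gamma\eta$). Since $T$ is tangent to $S$ (by \itemref{N:XT-cond}{I:cc} and Lemma \ref{L:Vdef}) and $\nu$ restricts to a top-degree $3$-form on the compact boundaryless hypersurface $S$, I would argue via Stokes: because $df\w\nu=0$ for degree reasons, the derivation property of $\iota_T$ gives $(Tf)\,\nu=df\w\iota_T\nu$, and then $d(f\,\iota_T\nu)=df\w\iota_T\nu+f\,\lie_T\nu$ (using $d\nu=0$ and Cartan's formula $\lie_T=d\iota_T+\iota_T d$) together with $\intl_S d(f\,\iota_T\nu)=0$ yields
\[
\intl_S (Tf)\cdot\nu=-\intl_S f\cdot\lie_T\nu .
\]
Hence the entire lemma reduces to the single claim $\lie_T\nu\rest_S=0$.

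To verify $\lie_T\nu\rest_S=0$ I would differentiate the three factors of $\nu=(z_2\,dz_1-z_1\,dz_2)\w dw_1\w dw_2$ separately. The $dw_j$ factors are immediate: $\lie_T\,dw_j=d(Tw_j)=0$ on $S$ because $Tw_1=Tw_2=0$ by \eqref{E:diff-rules}. For the first factor, using $Tz_1=w_2$ and $Tz_2=-w_1$ from \eqref{E:diff-rules},
\[
\lie_T(z_2\,dz_1-z_1\,dz_2)=(z_1\,dw_1+z_2\,dw_2)-(w_1\,dz_1+w_2\,dz_2),
\]
and the tangential relation $d(z_1w_1+z_2w_2)=0$ along $S$ (established in the proof of Lemma \ref{L:Vdef}, i.e. $z_1\,dw_1+z_2\,dw_2=-(w_1\,dz_1+w_2\,dz_2)$ on $S$) rewrites this as $-2(w_1\,dz_1+w_2\,dz_2)$ on $S$.

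Combining the pieces, $\lie_T\nu=-2\,(w_1\,dz_1+w_2\,dz_2)\w dw_1\w dw_2$ on $S$. Applying the same tangential relation once more to replace $w_1\,dz_1+w_2\,dz_2$ by $-(z_1\,dw_1+z_2\,dw_2)$, every resulting term carries a repeated $dw_j$ and so vanishes; therefore $\lie_T\nu\rest_S=0$, which finishes the argument. I expect the only delicate point to be the sign bookkeeping in the Lie-derivative computation and the consistent use of $z_1\,dw_1+z_2\,dw_2=-(w_1\,dz_1+w_2\,dz_2)$, which is valid only along $S$; the Stokes step itself is routine since $S$ is compact without boundary (the pairing being set up for the global, compact situation, cf. Technical Remark \ref{R:tech}).
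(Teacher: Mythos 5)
Your proof is correct and follows essentially the same route as the paper's: reduce via the Leibniz rule to showing $\intl_S T(\gamma\eta)\cdot\nu=0$, use the degree argument $(Tf)\,\nu=df\w\iota_T\nu$ on the $3$-manifold $S$ together with Stokes, and then kill the remaining term using $Tw_j=0$, $Tz_1=w_2$, $Tz_2=-w_1$, and $z_1w_1+z_2w_2=1$. The only cosmetic difference is that you package the final step as $\lie_T\nu\rest_S=0$ computed by the product rule, whereas the paper computes $\iota_T\nu=(z_1w_1+z_2w_2)\,dw_1\w dw_2=dw_1\w dw_2$ directly and observes that $d(dw_1\w dw_2)=0$; these are the same vanishing statement.
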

\begin{proof}

\allowdisplaybreaks
\begin{align*}
\lma T\gamma, \eta \rma+\lma \gamma, T\eta \rma
&= \intl_S T(\gamma\eta)\cdot\nu\\
&= \intl_S \iota_T d(\gamma\eta)\cdot\nu\\
&= \intl_S d(\gamma\eta)\cdot \iota_T \nu\\
&= \intl_S d(\gamma\eta \cdot \iota_T \nu)- \intl_S \gamma\eta\cdot d(\iota_T \nu)\\
&= 0- \intl_S \gamma\eta\cdot d(\iota_T ((z_2\,dz_1-z_1\,dz_2)\w dw_1\w dw_2)\\
&= - \intl_S \gamma\eta\cdot d((z_2\cdot Tz_1-z_1\cdot Tz_2)\cdot dw_1\w dw_2)\\
&\qquad\qquad +\intl_S \gamma\eta\cdot d( (z_2\,dz_1-z_1\,dz_2)\cdot Tw_1\w dw_2)\\
&\qquad\qquad -\intl_S \gamma\eta\cdot d( (z_2\,dz_1-z_1\,dz_2)\w dw_1\cdot Tw_2)\\
&= - \intl_S \gamma\eta\cdot d((z_2w_2+z_1w_1) \,dw_1\w dw_2)+0-0\\
&= - \intl_S \gamma\eta\cdot d(dw_1\w dw_2)\\
&=0.
\end{align*}
Here we have quoted
\begin{itemize}
\item the definition \eqref{E:pair-def} of the pairing $\lma\cdot\rma$;
\item the Leibniz rule $\iota_T(\varphi_1\w\varphi_2)=(\iota_T\varphi_1)\w\varphi_2+(-1)^{\deg\varphi_1}\varphi_1\w(\iota_T\varphi_2)$ for the interior product $\iota_T$;
\item the fact that $S$ is integral for 4-forms;
\item Stokes' theorem;
\item the rules \eqref{E:diff-rules};
\item the relation \itemref{N:w-def}{I:key-rel}.
\end{itemize}
\end{proof}

\begin{Thm}\label{T:pair}
Let $\mu$ be a CR function on a compact strongly $\C$-convex hypersurface $S$.   Then $\mu=0$ if and  only if $\lma \mu, \eta \rma =0$ for all  dual-CR $\eta$ on $S$.
\end{Thm}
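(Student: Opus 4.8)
The statement is a non-degeneracy assertion: with respect to $\lma\,\cdot\,,\,\cdot\,\rma$ the CR functions should be exactly the annihilator of the dual-CR functions. The plan is to prove the two resulting inclusions, using Lemma~\ref{L:parts} and an analogous identity for $X$ as the algebraic skeleton and the holomorphic extendability of CR functions as the analytic input. First I would record that Lemma~\ref{L:parts} says precisely that $T$ is skew-adjoint for the pairing, so that the image of $T$ annihilates $\ker T$, namely the dual-CR functions. Repeating the computation in the proof of Lemma~\ref{L:parts} with $X$ in place of $T$, and using \eqref{E:diff-rules} together with \itemref{N:w-def}{I:key-rel}, one gets $\iota_X\nu = dz_1\w dz_2$ on $S$ (the exact analogue of $\iota_T\nu = dw_1\w dw_2$); since $dz_1\w dz_2$ is closed, Stokes' theorem yields $\lma Xg,\eta\rma = -\lma g, X\eta\rma$. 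Thus $X$ is skew-adjoint as well, and the image of $X$ annihilates $\ker X$, namely the CR functions.

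For the forward inclusion I want to show that a CR $\mu$ annihilates every dual-CR $\eta$. Skew-adjointness of $T$ reduces this to realizing $\mu$ in the image of $T$ (modulo the kernel of the pairing): if $\mu = Tq$ then $\lma\mu,\eta\rma = \lma Tq,\eta\rma = -\lma q,T\eta\rma = 0$, since $T\eta = 0$ by \itemref{N:XT-cond}{I:cc}. Solving $Tq = \mu$ is a first-order equation along the dual-CR structure, and this is exactly where the global hypotheses enter: because $S$ is compact and strongly $\C$-convex it is strongly pseudoconvex, so $\mu$ extends holomorphically to the enclosed domain, and I would use this extension (with compactness) to remove the period obstruction to global solvability. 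By the symmetric dictionary $z\leftrightarrow w$, CR $\leftrightarrow$ dual-CR, $X\leftrightarrow T$, the companion statement — that every dual-CR function lies in the image of $X$ — holds in the same way.

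For the reverse inclusion, suppose $\mu$ annihilates every dual-CR function. Skew-adjointness of $X$ gives $\lma X\mu, h\rma = -\lma\mu, Xh\rma$ for every $h$. Choosing $h$ so that $Xh$ realizes an arbitrary dual-CR function — possible by the companion solvability just mentioned — makes the right-hand side vanish, so $X\mu$ annihilates every function. Non-degeneracy of the pairing (in the sense of Technical Remark~\ref{R:tech}) then forces $X\mu = 0$, and $\mu$ is CR by \itemref{N:XT-cond}{I:bb}.

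The skew-adjointness of $X$ and $T$ and the bookkeeping with \eqref{E:diff-rules} are routine. The real obstacle is the solvability statement that CR functions lie in the image of $T$ (and its dual), equivalently the non-degeneracy of $\lma\,\cdot\,,\,\cdot\,\rma$ between the two classes: this is the analytic heart of the theorem, where compactness, strong $\C$-convexity, and the resulting holomorphic extension are indispensable, and where I expect the careful interpretation of the pairing promised in Technical Remark~\ref{R:tech} to be required to make the argument rigorous.
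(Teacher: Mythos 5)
The paper does not actually prove Theorem~\ref{T:pair}: its ``proof'' is a citation of the Fantappi\`e--Martineau-type duality established in \cite[Thm.\ 3, (4.3d)]{Bar} (see also \cite[Ch.\ 3]{APS}), so any self-contained argument is necessarily a different route. Before comparing routes, though, you should notice that the statement you set out to prove is not the one that is true or used. Test it on the unit sphere, where $w(z)=\bar z$ and dual-CR $=$ conjugate-CR: the restriction of $\nu$ to $S^3$ is a nonzero constant multiple of surface measure, so $\lma z^\alpha,\bar z^\alpha\rma$ is a nonzero multiple of $\int_{S^3}|z^\alpha|^2\,dS\ne 0$; in particular $\lma 1,1\rma=\int_S\nu\ne 0$ (by Stokes, since $d\nu=-2\,dz_1\w dz_2\w dw_1\w dw_2$ for any smooth extension of $w$ to the ball). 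Thus the constant function, which is both CR and dual-CR, already violates the ``only if'' direction as literally written, and in fact no nonzero CR function on the sphere annihilates all dual-CR functions. The content of \cite[(4.3d)]{Bar} --- and the only thing invoked in the proof of \itemref{N:XT-cond}{I:cir-proj-glo} --- is a \emph{non-degeneracy} assertion: the pairing places the CR and dual-CR functions in duality, so a CR function $\mu$ with $\lma\mu,\eta\rma=0$ for every dual-CR $\eta$ must vanish identically. Your entire architecture (two annihilator inclusions) is aimed at an orthogonality that does not hold.

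The same computation kills your key technical step. You propose to prove ``CR $\Rightarrow$ orthogonal to dual-CR'' by solving $Tq=\mu$; but Lemma~\ref{L:parts} shows every function of the form $Tq$ is orthogonal to the dual-CR function $1$, while $\lma 1,1\rma\ne 0$, so $\mu\equiv 1$ is CR yet not in the image of $T$, and the solvability claim is false. The reverse inclusion is also a non sequitur: from $\lma X\mu,h\rma=-\lma\mu,Xh\rma$ you may conclude $\lma X\mu,h\rma=0$ only for those $h$ whose image $Xh$ is dual-CR (or a limit of such), not for all $h$, so you cannot conclude that $X\mu$ annihilates everything. A correct treatment has to re-read the theorem as the non-degeneracy statement and then invoke the global duality theory for $\C$-convex domains (holomorphic extension of CR functions to the enclosed domain, of dual-CR functions to the dual domain, and the Fantappi\`e transform isomorphism); the integration-by-parts identities alone cannot produce this. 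You did correctly flag the solvability/non-degeneracy issue as the analytic heart of the matter --- but as you have set it up it is false rather than merely hard.
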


\begin{proof}\,
\cite[(4.3d) from Theorem 3]{Bar}. (Note also definition enclosing \cite[(4.2)]{Bar}.) 
\end{proof}

\begin{proof}[Proof of \itemref{N:XT-cond}{I:cir-proj-glo}]  Assume that 
$XXTu=0$.  Noting that $S$ is simply-connected, from  \itemref{N:XT-cond}{I:cir-proj-loc}  it suffices to prove that $TTXu=0$.  From Lemma \ref{L:proj-1eq} we know that  
$TTXu$ is CR. By Theorem \ref{T:pair} it will suffice to show that
\begin{equation*}
\lma TTXu, \eta \rma=0
\end{equation*}
for dual-CR $\eta$. But from Lemma \ref{L:parts} we have
\begin{align*}
\lma TTXu, \eta \rma&=- \lma TXu, T\eta \rma\\
&=0
\end{align*}
as required.
\end{proof}

\begin{Remark}\label{R:pd} From symmetry of formulas in Lemma \ref{L:w-def} and \ref{L:Vdef} we have that $X_{S^*}=\dual_* T_{S}, T_{S^*}=\dual_* X_{S}$ and $S^{**}=S$.  These facts serve to explain why the formulas throughout this section appear in dual pairs.
\end{Remark}

\begin{Tech}\label{R:tech}
In \cite{Bar} the pairing \eqref{E:pair-def} applies not to functions $\mu,\nu$ but rather to forms
$\mu(z)\,(dz_1\w dz_2)^{2/3}$, $\mu(w)\,(dw_1\w dw_2)^{2/3}$; the additional notation is important in \cite{Bar} for keeping track of invariance properties under projective transformation but is not needed here.

Note also that \eqref{E:pair-def} coincides (up to a constant) with the pairing (3.1.8) in \cite{APS} with $s=w_1\,dz_1+w_2\,dz_2$.
\end{Tech}

\section{Proof of Theorem \ref{T:cir-plh}}\label{S:pfs2}

For the reader's convenience we restate the main theorem in the conjugate setting. 

\conj*

It is not possible in general to have $Y=\bar X$.

\begin{Lem}\label{L:prlh-1eq}
Suppose that $XXYu=0$ so that by Lemma \ref{L:XXker} we may write $Yu=f_1w_1+f_2w_2$ with $f_1, f_2$ CR.  Then 
\begin{equation}\label{E:plh-2op}
\bar{XXY}u = \alpha\left(\frac{\dee f_1}{\dee z_1} + \frac{\dee f_2}{\dee z_2}\right).
\end{equation}
In particular, $\alpha\inv\bar{XXY}u$ is CR.
\end{Lem}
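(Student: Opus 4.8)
The plan is to mirror the proof of Lemma~\ref{L:proj-1eq} as closely as possible, with the vector field $T$ there replaced by $\bar X$ here and with careful bookkeeping of one surviving scalar factor. The two structural facts I would record at the outset are: first, that $\bar X = \alpha Y$, which follows from \itemref{N:XT-cond}{I:bb} (namely $X=\alpha\bar Y$) together with the reality $\bar\alpha=\alpha$ from \itemref{N:rot-lem1}{I:alphar}; and second, that $\bar Y$ annihilates CR functions, which is the conjugate of the statement in Lemma~\ref{L:w-def}\itemref{N:w-def}{I:tang(1,0)} that $Y$ annihilates conjugate-CR functions. This second fact is what plays, for $\bar Y$, the role that the rules \eqref{E:diff-rules} for $X$ acting on $Tu$ played in Lemma~\ref{L:proj-1eq}.

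The genuinely new point, and the step I expect to be the main obstacle, is that the operator to be evaluated is the \emph{conjugate} operator $\bar{XXY}=\bar X\,\bar X\,\bar Y$, whereas the hypothesis $XXYu=0$ supplies information about $Yu=f_1w_1+f_2w_2$ rather than about $\bar Y u$. To bridge this mismatch I would write $\bar X=\alpha Y$ and first evaluate the inner piece $\bar X\bar Y u=\alpha\,Y\bar Y u$. Commuting with the bracket $[Y,\bar Y]=-i\xi R$ from \itemref{N:rot-lem1}{I:YbarY} rewrites $Y\bar Y u$ as $\bar Y Y u+[Y,\bar Y]u=\bar Y(f_1w_1+f_2w_2)-i\xi Ru$, so that the known quantity $Yu=f_1w_1+f_2w_2$ finally surfaces. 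Since $\bar Y$ kills the CR functions $f_1,f_2$ while $\bar Y w_1=\bar\xi z_2$ and $\bar Y w_2=-\bar\xi z_1$ by \eqref{E:diff-rules}, and since $\alpha\xi=\alpha\bar\xi=1$, this collapses to the clean identity $\bar X\bar Y u=(f_1z_2-f_2z_1)-iRu$.

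It then remains to apply one more copy of $\bar X=\alpha Y$ to this expression, i.e. to expand $\alpha\,Y\left((f_1z_2-f_2z_1)-iRu\right)$. Here I would use $Yz_1=w_2$, $Yz_2=-w_1$ from the definition of $Y$ in Lemma~\ref{L:w-def}, the rule $Rw_j=-iw_j$ read off from \itemref{N:rot-lem1}{I:wrot}, and the commutator $[R,Y]=-2iY$ from \itemref{N:Rbrack}{I:RY} to move $R$ past $Y$ so that $Yu=f_1w_1+f_2w_2$ reappears. The $w_1$- and $w_2$-coefficients then telescope exactly as in Lemma~\ref{L:proj-1eq}, leaving $\bar{XXY}u=\alpha\left((z_2Y-iw_1R)f_1-(z_1Y+iw_2R)f_2\right)$. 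This is where the factor $\alpha$ in \eqref{E:plh-2op} comes from: it is the outermost $\bar X=\alpha Y$, the inner one having been absorbed via $\alpha\xi=1$, which explains why the pluriharmonic formula carries an $\alpha$ absent from \eqref{E:proj-2op}.

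Finally I would invoke the identical bracket computation that closes Lemma~\ref{L:proj-1eq}: for CR $f_j$ the field $R$ acts as $i\left(z_1\,\vf{z_1}+z_2\,\vf{z_2}\right)$, and combining this with $Y=w_2\,\vf{z_1}-w_1\,\vf{z_2}$ and the defining relation $z_1w_1+z_2w_2=1$ from \itemref{N:w-def}{I:key-rel} yields $(z_2Y-iw_1R)f_1=\frac{\dee f_1}{\dee z_1}$ and $-(z_1Y+iw_2R)f_2=\frac{\dee f_2}{\dee z_2}$. Hence $\bar{XXY}u=\alpha\left(\frac{\dee f_1}{\dee z_1}+\frac{\dee f_2}{\dee z_2}\right)$, and since $f_1,f_2$ extend holomorphically their divergence is again CR, giving the concluding assertion that $\alpha\inv\bar{XXY}u$ is CR.
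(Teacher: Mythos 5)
Your proposal is correct and follows essentially the same route as the paper: both proofs mirror Lemma \ref{L:proj-1eq} by commuting operators to expose $Yu=f_1w_1+f_2w_2$, applying the rules \eqref{E:diff-rules} and the $R$-brackets, and ending with the identification of $(z_2Y-iw_1R)f_1-(z_1Y+iw_2R)f_2$ with the divergence. The only (cosmetic) difference is that you convert $\bar X$ to $\alpha Y$ at the outset and use $[Y,\bar Y]=-i\xi R$, whereas the paper first writes $\bar X\bar X\bar Y=\bar X\bar Y\bar X+\bar X[\bar X,\bar Y]$ and substitutes $\bar Xu=\alpha(f_1w_1+f_2w_2)$; the two bookkeepings agree since $[\bar X,\bar Y]=\alpha[Y,\bar Y]-(\bar Y\alpha)Y$.
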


\begin{proof}
We have
\begin{align*}
\allowdisplaybreaks
\bar{XXY}u 
&= \bar{XYX}u
+ \bar{X} [\bar X,\bar Y] u\\
&=\bar{XY} \left( \alpha \left(f_1w_1+f_2 w_2\right) \right) 
+\bar{X}\left( -iR-(\bar Y\alpha)Y\right) u && \text{\itemref{N:XT-cond}{I:bb}, \itemref{N:rot-lem1}{I:alphar}, \itemref{N:rot-lem1}{I:XY}}\\
&=\bar X\left( \alpha\bar Y \left( f_1w_1+f_2 w_2 \right)\right)
-i\bar X R u \\
&=\bar X\left(f_1z_2-f_2 z_1 \right)
-iR \bar X  u
-i[\bar X, R] u && \text{\itemref{N:XT-cond}{I:bb}, \eqref{E:diff-rules}}\\
&= \bar X\left(f_1z_2-f_2 z_1 \right)
-iR \left( \alpha \left( f_1w_1+f_2 w_2 \right) \right)
+2 \bar X u && \text{\itemref{N:XT-cond}{I:bb}, \itemref{N:Rbrack}{I:RbX}}\\
&= (\bar X f_1)\cdot z_2-f_1\cdot\alpha w_1-(\bar X f_2) \cdot z_1
- f_2 \cdot\alpha w_2\\
&\qquad - i \alpha \left(  (Rf_1)\cdot w_1-f_1\cdot(iw_1)+(Rf_2)\cdot w_2-f_2\cdot(iw_2) \right)
\\
&\qquad
+ 2 \alpha \left(f_1w_1+f_2 w_2\right)  && \text{\eqref{E:diff-rules}, \itemref{N:Rbrack}{I:Ralpha}, \itemref{N:rot-lem1}{I:wrot}, \itemref{N:XT-cond}{I:bb}}\\
&= (\bar X f_1)\cdot z_2-(\bar X f_2) \cdot z_1
 - i \alpha \left(  (Rf_1)\cdot w_1+(Rf_2)\cdot w_2 \right)
  \\
  &= \alpha\left(z_2Y-iw_1R)f_1-(z_1Y+iw_2R)f_2\right)\\
&= \alpha \left( \frac{\dee f_1}{\dee z_1}+ \frac{\dee f_2}{\dee z_2}\right).\\
\end{align*}
\end{proof}

\begin{proof}[Proof of \itemref{N:XY-cond}{I:cir-plh-loc}] 
To get the required lower bound on the null spaces, it will suffice to show that $XXY$ and $\bar{XXY}$ annihilate CR functions and conjugate-CR functions.  This follows from \itemref{N:XY-cond}{I:aaa}  along with \itemref{N:spec-2ord}{I:XY} and \itemref{N:spec-2ord}{I:bar-XY}.

For the other direction, if $XXYu=0=\bar{XXY}u$, then from Lemma \ref{L:proj-1eq} we have a 
closed 1-form $\tilde\omega\eqdef f_2\,dz_1 - f_1\,dz_2$ on the open subset of $S$ where $f_1$ and $f_2$ are CR functions satisfying $Yu=f_1w_1+f_2w_2$.  Restricting our attention to a simply-connected subset, we may write $\omega=d f$ with $ f$ CR.  Then  we have
\begin{align*}
Y f
&=w_2f_2+w_1f_1\\
&= Yu.
\end{align*}
Thus $u$ is the sum of the CR function $f$ and the conjugate-CR function $u-f$. 

The general case follows by localization.
\end{proof}

\begin{Lem}\label{L:Ydiv}
$\operatorname{div} Y\eqdef \frac{\dee w_2}{\dee z_1} - \frac{\dee w_1}{\dee z_2}$ and $\operatorname{div} \bar Y\eqdef {\frac{\dee \bar w_2}{\dee \bar z_1}} - \bar{\frac{\dee \bar w_1}{\dee \bar z_2}}$ vanish on $S$.
\end{Lem}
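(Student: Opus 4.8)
The plan is to read $\operatorname{div}Y$ off a single $(2,0)$-form. Set $\omega\eqdef w_1\,dz_1+w_2\,dz_2$ and compute the holomorphic part $\dee\omega=\left(\frac{\dee w_2}{\dee z_1}-\frac{\dee w_1}{\dee z_2}\right)dz_1\w dz_2=(\operatorname{div}Y)\,dz_1\w dz_2$. Thus it suffices to show $\dee\omega=0$, and the natural way to force this is to exhibit $\omega$ as $\dee$ of a function. Here the non-tangential symbols $\frac{\dee w_2}{\dee z_1}$ etc.\ must be read off a chosen extension of $w$ past $S$, and the crux of the lemma is to use the extension that is canonical in the circular setting.

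First I would introduce the gauge. Since $S$ is a circular strongly convex hypersurface enclosing the origin, it is the level set $\{p=1\}$ of the Minkowski functional $p$ of the convex body it bounds; strong convexity makes $p$ smooth near $S$, circularity gives $Rp=0$, and $p$ is positively homogeneous of degree one. I would record the two infinitesimal consequences: homogeneity gives $\left(z_1\frac{\dee}{\dee z_1}+z_2\frac{\dee}{\dee z_2}+\bar z_1\frac{\dee}{\dee\bar z_1}+\bar z_2\frac{\dee}{\dee\bar z_2}\right)p=p$, while $Rp=0$ gives $z_1\frac{\dee p}{\dee z_1}+z_2\frac{\dee p}{\dee z_2}=\bar z_1\frac{\dee p}{\dee\bar z_1}+\bar z_2\frac{\dee p}{\dee\bar z_2}$; together these yield $z_1\frac{\dee p}{\dee z_1}+z_2\frac{\dee p}{\dee z_2}=\tfrac12 p$.

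Next I would identify $w$ with the homogeneous extension $\hat w_j\eqdef 2\,\frac{\dee p/\dee z_j}{p}$. The displayed identity gives $z_1\hat w_1+z_2\hat w_2=1$ on $S$, and $\hat w_2\frac{\dee}{\dee z_1}-\hat w_1\frac{\dee}{\dee z_2}$ annihilates $p$ and is therefore tangent to $S$; by the uniqueness in Lemma~\ref{L:w-def} this forces $w_j=\hat w_j$, i.e.\ $w_j=2\,\dee_{z_j}\log p$ near $S$. Consequently $\omega=2\,\dee\log p$, so $\dee\omega=2\,\dee\dee\log p=0$ identically, and reading off the coefficient gives $\operatorname{div}Y=0$ on $S$. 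The statement for $\bar Y$ is the complex conjugate of this one (as $p$ is real, $\bar\omega=2\,\deebar\log p$ and $\deebar\deebar\log p=0$), so it follows by conjugation.

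The step I expect to be the real content — rather than bookkeeping — is the one already flagged after Lemma~\ref{L:proj-1eq}: the symbols $\frac{\dee w_2}{\dee z_1}$ are non-tangential and acquire meaning only after $w$ is extended off $S$, and $\operatorname{div}Y|_S$ genuinely depends on that choice. The homogeneous gauge supplies the canonical extension compatible with the circular structure (equivalently, it pins the radial derivative via $\left(z_1\frac{\dee}{\dee z_1}+z_2\frac{\dee}{\dee z_2}\right)w_j=-w_j$); once that reading is fixed the vanishing is the formal identity $\dee\dee\log p=0$. Confirming that this is the extension implicit in the rules \eqref{E:diff-rules}, rather than a competing choice for which the combination would not vanish, is the only delicate point.
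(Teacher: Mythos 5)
Your proof is correct and is essentially the paper's argument: the paper likewise uses circularity to normalize the defining function so that $z_1\frac{\dee\rho}{\dee z_1}+z_2\frac{\dee\rho}{\dee z_2}\equiv 1$ near $S$, whence $w_j=\frac{\dee\rho}{\dee z_j}$ and the divergence vanishes by equality of mixed partials --- your choice $\rho=2\log p$ with $p$ the Minkowski functional is exactly one concrete realization of that adjusted defining function, and $\dee\dee\log p=0$ is the same computation phrased in form language. The only added value in your write-up is that it makes the normalization explicit and flags the (real) extension-dependence of $\operatorname{div}Y$, which the paper leaves implicit.
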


\begin{proof}
Since $S$ is circular, any defining function $\rho$ for $S$ will satisfy 
$\Im \left( z_1\frac{\dee\rho}{\dee z_1}+z_2\frac{\dee\rho}{\dee z_2} \right)
=-\frac{R\rho}{2}  = 0$.  Adjusting our choice of defining function we may arrange that
$z_1\frac{\dee\rho}{\dee z_1}+z_2\frac{\dee\rho}{\dee z_2}\equiv 1$ in some neighborhood of 
$S$.  Then from the proof of Lemma \ref{L:w-def} we have $\frac{\dee w_2}{\dee z_1} - \frac{\dee w_1}{\dee z_2}=\frac{\dee^2 \rho}{\dee z_1 \dee z_2}-
\frac{\dee^2 \rho}{\dee z_2 \dee z_1}=0$.

The remaining statement follows by conjugation.
\end{proof}

\begin{Lem}\label{L:parts-plh}
$\displaystyle{\intl_S \left(X\gamma\right)\eta\,\frac{dS}{\alpha} = - \intl_S \gamma\left(X\eta\right)\,\frac{dS}{\alpha}}$
\end{Lem}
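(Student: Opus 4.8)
The plan is to prove the stronger statement that $\intl_S (Xh)\,\frac{dS}{\alpha}=0$ for every smooth function $h$ on $S$; the asserted identity then follows by applying this to $h=\gamma\eta$ and invoking the Leibniz rule $X(\gamma\eta)=(X\gamma)\eta+\gamma(X\eta)$. Recall that $X$ is tangent to $S$ and, by \itemref{N:XT-cond}{I:bb}, equals $\alpha\bar Y$ with $\alpha$ real and nowhere vanishing. The whole computation is the mirror image, under the $z\leftrightarrow w$, $X\leftrightarrow T$ duality of Remark \ref{R:pd}, of the integration-by-parts Lemma \ref{L:parts}: I would replace the measure $\frac{dS}{\alpha}$ by the explicit $3$-form
\begin{equation*}
\Omega\eqdef (w_2\,dw_1-w_1\,dw_2)\w dz_1\w dz_2,
\end{equation*}
obtained from $\nu$ in \eqref{E:nu-def} by interchanging the roles of $z$ and $w$.

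With $\Omega$ in hand the argument runs exactly as in Lemma \ref{L:parts}. Since $X$ is tangent to $S$ and $S$ is integral for $4$-forms we have $(Xh)\,\Omega=dh\w\iota_X\Omega$ along $S$, so Stokes' theorem on the closed manifold $S$ gives
\begin{equation*}
\intl_S (Xh)\,\Omega=\intl_S dh\w\iota_X\Omega=\intl_S d\bigl(h\cdot\iota_X\Omega\bigr)-\intl_S h\cdot d(\iota_X\Omega)=-\intl_S h\cdot d(\iota_X\Omega).
\end{equation*}
Contracting with the help of $Xz_1=Xz_2=0$, $Xw_1=z_2$, $Xw_2=-z_1$ from \eqref{E:diff-rules} together with the relation \itemref{N:w-def}{I:key-rel}, the Leibniz rule for $\iota_X$ yields
\begin{equation*}
\iota_X\Omega=(w_2\,Xw_1-w_1\,Xw_2)\,dz_1\w dz_2=(z_1w_1+z_2w_2)\,dz_1\w dz_2=dz_1\w dz_2,
\end{equation*}
which is closed; hence $d(\iota_X\Omega)=0$ and $\intl_S (Xh)\,\Omega=0$. (Incidentally this shows that $\Omega\big\rvert_S$ is invariant under $X$.)

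The hard part will be the bookkeeping that justifies replacing $\frac{dS}{\alpha}$ by $\Omega$, i.e.\ checking that $\frac{dS}{\alpha}=c\,\Omega\big\rvert_S$ for a nonzero real constant $c$. This is exactly where the circular hypothesis and Lemma \ref{L:Ydiv} enter: the vanishing of $\operatorname{div}Y$ and $\operatorname{div}\bar Y$ on $S$ is what guarantees both that $\Omega\big\rvert_S$ is insensitive to the (extension-dependent) differentials $dw_1,dw_2$ in the way required and that $\Omega\big\rvert_S$ is in fact a real $3$-form, so that it can be a real multiple of the real measure $\frac{dS}{\alpha}$. Circular invariance (via the rotation field $R$) then forces the proportionality factor to be constant along $S$, and I would pin the constant down by evaluating on the model case of the unit sphere, where $w=\bar z$, $\alpha\equiv 1$, and a direct computation gives $\Omega\big\rvert_S=-2\,dS$ (with the standard orientation). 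Once this identification is in place the lemma follows at once from the displayed vanishing.
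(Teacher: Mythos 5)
Your reduction to showing $\intl_S (Xh)\,\frac{dS}{\alpha}=0$ is fine, and your Stokes computation is correct and genuinely elegant: with $\Omega=(w_2\,dw_1-w_1\,dw_2)\w dz_1\w dz_2$ one indeed gets $\iota_X\Omega=(w_2\,Xw_1-w_1\,Xw_2)\,dz_1\w dz_2=(z_1w_1+z_2w_2)\,dz_1\w dz_2=dz_1\w dz_2$, which is closed, so $\intl_S(Xh)\,\Omega=0$ for all smooth $h$ --- a true mirror image of Lemma \ref{L:parts}. But the step you defer as ``the hard part'' is not bookkeeping; as stated it is false. The identity $\frac{dS}{\alpha}=c\,\Omega\rest_S$ with $c$ constant already fails for the ellipsoid $|z_1|^2+\lambda|z_2|^2=1$ with $\lambda\ne1$: there $w=(\bar z_1,\lambda\bar z_2)$ and $\alpha\equiv 1/\lambda$, and evaluating both $3$-forms on the frame $(X,Y,R)$ gives $\Omega(X,Y,R)=i$ identically, whereas $\frac{1}{\alpha}\,dS(X,Y,R)=dS(\bar Y,Y,R)$ equals $-i/2$ at $(1,0)$ but $-i\sqrt{\lambda}/2$ at $(0,1/\sqrt{\lambda})$. (What is true is that $\Omega\rest_S$ is a constant multiple of $\frac{dS}{\alpha\,|\nabla\rho|}$ with $\rho$ normalized as in Lemma \ref{L:Ydiv}; the Euclidean $dS$ carries the extra non-constant factor $|\nabla\rho|$.) Your appeal to circular invariance cannot close this gap: $R$-invariance of both measures only makes their ratio constant along each one-dimensional rotation orbit, and the orbit space of the $3$-manifold $S$ is $2$-dimensional, so the ratio is not forced to be globally constant; evaluating on the unit sphere therefore pins down nothing.

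The paper's proof avoids all of this and is essentially one line: by \itemref{N:XT-cond}{I:bb} we have $\frac{X}{\alpha}=\bar Y$, so the asserted identity is $\intl_S(\bar Y\gamma)\eta\,dS=-\intl_S\gamma(\bar Y\eta)\,dS$, which is integration by parts for the tangential, divergence-free (Lemma \ref{L:Ydiv}) field $\bar Y$, justified by the divergence theorem on a tubular neighborhood of $S$. If you wish to keep the Stokes-theoretic route, you must replace the unproved (and false) identification of $\frac{dS}{\alpha}$ with $c\,\Omega\rest_S$ by an honest comparison of $\Omega\rest_S$ with the surface measure; that comparison is exactly where the normalization of $\rho$ and the content of Lemma \ref{L:Ydiv} reenter, at which point you have reproduced the paper's argument with additional overhead.
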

\begin{proof} 
\begin{align*}
\intl_S \left(X\gamma\right)\eta\,\frac{dS}{\alpha}
&= \intl_S \left(\bar Y\gamma\right)\eta\,dS && \text{\itemref{N:XT-cond}{I:bb}}\\
&= -\intl_S \gamma\left(\bar Y\eta\right)\,dS && \text{(Lemma \ref{L:Ydiv})}\\
&= - \intl_S \gamma\left(X\eta\right)\,\frac{dS}{\alpha} && \text{\itemref{N:XT-cond}{I:bb}}
\end{align*}
(The integration by parts above may be justified by applying the divergence theorem on a tubular neighborhood of $S$ and passing to a limit.)

 \end{proof}

\begin{proof}[Proof of \itemref{N:XY-cond}{I:cir-plh-glo}]
Assume that 
$XXYu=0$.  Noting that $S$ is simply-connected, from  \itemref{N:XY-cond}{I:cir-plh-loc}  it suffices to prove that $\bar{XXY}u=0$.  From Lemma \ref{L:proj-1eq} we know that  
$\alpha\inv\bar{XXY}u$ is CR.  The desired conclusion now follows from
\begin{align*}
\intl_S  \left|\bar{XXY}u\right|^2\,\frac{dS}{\alpha^2}
&=\intl_S \alpha\inv \bar{XXY}u\cdot {XXY}\bar u\,\frac{dS}{\alpha}\\
&= - \intl_S X\left( \alpha\inv\bar{XXY}u \right) \cdot {XY}\bar u\,\frac{dS}{\alpha}
&&\text{(Lemma \ref{L:parts-plh})}\\
&= - \intl_S 0 \cdot {XY}\bar u\,\frac{dS}{\alpha}
& & \text{(Lemma \ref{L:prlh-1eq})}\\
&=0.
\end{align*}

\end{proof}

\section{Further comments}\label{S:furth}

\subsection{Remarks on uniqueness}\label{S:uniq}

\begin{Prop}\label{T:plh-un}
Suppose that in the setting of Theorem \ref{T:cir-proj}  we have vector fields $\tilde X,\tilde T$ satisfying (suitably-modified) \itemref{N:XT-cond}{I:bb} and \itemref{N:XT-cond}{I:cc}. Then $\tilde X\tilde X\tilde T$ annihilates CR functions and dual-CR functions if and only if there are CR functions $f_1, f_2$ and $f_3$ so that $f_1w_1+f_2w_2$ and $f_3$ are non-vanishing and 
\begin{align*}
\tilde X&= f_3(f_1w_1+f_2w_2)^2 X\\
\tilde T&=  \frac{1}{f_1w_1+f_2w_2} T.
\end{align*}
\end{Prop}

\begin{proof}
From \itemref{N:XT-cond}{I:bb} and \itemref{N:XT-cond}{I:cc}  we have $\tilde X=\gamma X$, $\tilde T=\eta T$ with non-vanishing scalar functions $\gamma$ and $\eta$.

Suppose that $\tilde X\tilde X\tilde T$ annihilates CR functions and dual-CR functions.
By routine computation we have
\begin{equation*}
\tilde X\tilde X\tilde T
= \gamma^2\eta XXT + \gamma\Big( \left(2\gamma(X\eta)+\eta(X\gamma)\right) XT
+ (X(\gamma(X\eta))T\Big).
\end{equation*}

The operator $\left(2\gamma(X\eta)+\eta(X\gamma)\right) XT
+ (X(\gamma(X\eta))T$ must in particular annihilate CR functions.  
But if $f$ is CR, then using Lemma \ref{L:XTbrack} we have
\begin{equation*}
\Big( \left(2\gamma(X\eta)+\eta(X\gamma)\right) XT
+ (X(\gamma(X\eta))T\Big)f = \Big( i\left(2\gamma(X\eta)+\eta(X\gamma)\right) R
+ (X(\gamma(X\eta))T\Big)f
\end{equation*}
Since $R$ and $T$ are $\C$-linearly independent and $f$ is arbitrary it follows that we must have
\begin{align*}
X(\gamma\eta^2)=2\gamma(X\eta)+\eta(X\gamma) &= 0\\
X(\gamma(X\eta) &= 0.
\end{align*}

We set $f_3=\gamma\eta^2$ which is CR and non-vanishing.  Then the second equation above
yields
\begin{align*}
-f_3 \cdot XX(\eta\inv) 
&=   X\left(f_3 \,\eta^{-2}(X\eta)\right)\\
&=X(\gamma(X\eta)\\
&=0
\end{align*}
and hence $XX(\eta\inv)=0$.  From  Lemma \ref{L:XXker} we have
$\eta=\frac{1}{f_1w_1+f_2w_2}$ with $f_1$ and $f_2$ CR.  The result now follows.

The converse statement follows by reversing steps.
\end{proof}

\begin{Prop}
Suppose that in the setting of Theorem \ref{T:cir-plh}  we have vector fields $\tilde X,\tilde T$ satisfying (suitably-modified) \itemref{N:XY-cond}{I:aaa} and \itemref{N:XY-cond}{I:bbb}. Then $\tilde X\tilde X\tilde Y$ annihilates CR functions and conjugate-CR functions if and only if there are CR functions $f_1, f_2$ and $f_3$ so that $f_1w_1+f_2w_2$ and $f_3$ are non-vanishing and 
\begin{align*}
\tilde X&= f_3(f_1w_1+f_2w_2)^2 X\\
\tilde Y&=  \frac{1}{f_1w_1+f_2w_2} Y.
\end{align*}
\end{Prop}

The proof is similar to that of Proposition \ref{T:plh-un}, using \itemref{N:rot-lem1}{I:XY*} in place of Lemma \ref{L:XXker}.

\subsection{Nirenberg-type result}\label{S:Nir}

\begin{Prop}\label{P:Nir-plh}
 Given a point $p$ on a strongly pseudoconvex hypersurface $S\subset\C^2$, any 2-jet at $p$ of a $\C$-valued function on $S$ is the 2-jet of the restriction to $S$ of a pluriharmonic function on $\C^2$.
\end{Prop}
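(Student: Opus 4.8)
The plan is to realize every $2$-jet explicitly using pluriharmonic \emph{polynomials}, which are automatically pluriharmonic on all of $\C^2$. Recall that a $\C$-valued function $U$ satisfies $\dee\deebar U=0$ near a point exactly when $U=F+\bar H$ with $F,H$ holomorphic, so the pluriharmonic polynomials of degree $\le 2$ are precisely the functions $F+\bar H$ with $F,H$ holomorphic polynomials of degree $\le 2$. The $2$-jet of $U|_S$ at $p$ depends only on the $2$-jet of $U$ at $p$ and the $2$-jet of the embedding of $S$ at $p$; this observation is the engine driving every reduction below.

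First I would reduce to a local statement and to a model hypersurface. It suffices to realize each prescribed $2$-jet by a function pluriharmonic on a ball about $p$: replacing such a local solution $F+\bar H$ by $P+\bar Q$, where $P,Q$ are the degree-$\le 2$ holomorphic Taylor polynomials of $F,H$ at $p$, produces an entire pluriharmonic polynomial with the same $2$-jet on $S$ at $p$. Local realizability of every $2$-jet is invariant under a local biholomorphism $\Phi$ fixing $p$ (if $\tilde U$ is pluriharmonic and realizes the pushed-forward jet on $\Phi(S)$ then $\tilde U\circ\Phi$ does the job on $S$), and strong pseudoconvexity lets us choose $\Phi$ so that $\Phi(S)$ has the same $2$-jet at $0$ as the model $S_0=\{\Im z_2=|z_1|^2\}$. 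Since only $2$-jets of the embedding enter, it is then enough to treat $S_0$ at $0$.

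On $S_0$ I would introduce real coordinates $(x,y,s)$ via $z_1=x+iy$, $z_2=s+i(x^2+y^2)$, and simply compute the $2$-jets at $0$ of the restrictions of the twelve monomials $1,z_1,z_2,z_1^2,z_1z_2,z_2^2$ and their conjugates. The target $2$-jet is an arbitrary element of $\C^{10}$ (coefficients of $1,x,y,s,x^2,y^2,s^2,xy,xs,ys$), and matching coefficients yields a linear system for the polynomial coefficients of $F$ and $H$. The system decouples into small blocks, each readily solved; the constant and $s^2$ blocks are under-determined, consistent with the expected kernel.

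The one block where something could go wrong---and the step I expect to be the crux---is the pair of equations coming from the pure second-order terms $x^2$ and $y^2$. Here the only second-order contribution from $z_1,\bar z_1,z_1^2,\bar z_1^2$ is a combination of $x^2-y^2$ and $xy$, so the symmetric part $x^2+y^2$ can be produced \emph{only} by the restrictions of $z_2$ and $\bar z_2$, whose $2$-jets contain $\pm i(x^2+y^2)$ precisely because of the Levi term in the defining equation of $S_0$. Thus the solvability of the $x^2,y^2$-block rests exactly on the non-vanishing of the Levi form, i.e.\ on strong pseudoconvexity; were $S$ Levi-flat at $p$ one would instead encounter the obstruction $c_{x^2}+c_{y^2}=0$. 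Verifying that this block (and hence the whole system) is solvable completes the proof, and simultaneously pinpoints the role of the hypothesis.
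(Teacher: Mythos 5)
Your proposal is correct and follows essentially the same route as the paper: normalize $S$ to $y_2=|z_1|^2+O(3)$ by a local biholomorphism, identify $2$-jets on $S$ with $2$-jets in the coordinates $(z_1,x_2)$, and realize each jet by a pluriharmonic polynomial of degree $\le 2$, with the Hermitian term $z_1\bar z_1$ supplied by the combination of $z_2$ and $\bar z_2$ (the paper's explicit coefficients $\tfrac{D\mp iG}{2}$ are exactly the solution of your $x^2+y^2$ block). The only difference is presentational: the paper writes down the solving polynomial outright, while you argue solvability of the linear system block by block.
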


\begin{proof}
After performing a standard local biholomorphic change of coordinates we may reduce to the case where $p=0$ and $S$ is described near 0 by an equation of the form
\[y_2=z_1\bar z_1+O(\|(z_1,x_2)\|)^3.\]  The projection $(z_1,x_2+iy_2)\mapsto (z_1,x_2)$ induces a bijection between 2-jets at 0 along $S$ and 2-jets at 0 along $\C\times\R$.  It suffices now to note that the 2-jet 
\begin{equation*}
A+Bz_1+C\bar{z}_1+Dx_2+E z_1^2+ F \bar{z}_1^2 + G z_1\bar{z}_1 + H z_1 x_2 + I \bar{z}_1 x_2+Jx_2^2
\end{equation*}
is induced by the pluriharmonic polynomial
\begin{equation*}
A+Bz_1+C\bar{z}_1+\frac{D-iG}{2} z_2 + \frac{D+iG}{2} \bar{z}_2+ E z_1^2+ F \bar{z}_1^2  +
Hz_1z_2 + I \bar{z}_1\bar{z}_2 + J\bar{z}_2^2.
\end{equation*}

\end{proof}


\begin{thebibliography}{KeSt2}



\bibitem[Amo]{Amo} L. Amoroso, {\em Sopra una problema al contorno,} Rend. Circ. Mat. Palermo {\bf 33} (1912), 75--85.

\bibitem[APS]{APS} 
M. Andersson, M. Passare and R. Sigurdsson, {\em Complex convexity and analytic functionals,} Progress in Mathematics {\bf 225}, Birkh\"auser Verlag, 2004.

\bibitem[Aud]{Aud} T. Audibert, {\em Op\'erateurs diff\'erentielles sur la sph\`ere de $\C^n$ caract\'erisant les restrictions des
fonctions pluriharmoniques,} Thesis, Universit\'e de Provence, 1977.


\bibitem[Bar]{Bar} D. Barrett, {\em Holomorphic projection and duality for domains in complex projective space,}  Trans. Amer. Math. Soc. {\bf 368} (2016), 827--850. 

\bibitem[Bed1]{Bed1} E. Bedford, {\em The Dirichlet problem for some overdetermined systems on the unit
   ball in $\C^{n}$ ,}  Pacific J. Math. {\bf 51} (1974), 19--25.
   
\bibitem[Bed2]{Bed2} E. Bedford, {\em $(\partial \bar \partial )\sb{b}$ and the real parts of CR
   functions,}  Indiana Univ. Math. J. {\bf 29} (1980), 333--340.
   
\bibitem[BeFe]{BeFe} E. Bedford, P. Federbush, {\em Pluriharmonic boundary values,}   T\^ohoku Math. J. (2) {\bf 26} (1974), 505--511.

\bibitem[BeTa]{BeTa} E. Bedford, B. A. Taylor, 
{\em The Dirichlet problem for a complex Monge-Ampre equation.,}
Invent. Math. {\bf 37} (1976),  1Ð44. 

\bibitem[Bog]{Bog}
A. Boggess, {\em CR Manifolds and the tangential Cauchy-Riemann complex,}  CRC Press,  1991.

\bibitem[Bol]{Bol} M. Bolt, {\em The M\"obius geometry of hypersurfaces,} Michigan Math. J. {\bf56} (2008), 603-622.

\bibitem[CCY]{CCY} J. Case, S. Chanillo, P. Yang, {\em The CR Paneitz operator and the stability of CR pluriharmonic
   functions,}  Adv. Math. {\bf 287} (2016), 109--122.
   
\bibitem[{\v C}er]{Cer} M. \v Cerne,
{\em Maximal plurisubharmonic functions and the polynomial hull of a completely circled fibration,}  
Ark. Mat. {\bf 40} (2002),  27--45. 

\bibitem[DeTr]{DeTr} J. Detraz and J.M. Tr\'epreau, {\em
Une caract\' erisation des quadriques hermitiennes dans $\C^n$,} J. Analyse Math. {\bf 55}, (1990), 51--58. 

   
\bibitem[Gra]{Gra} C. R. Graham, {\em The Dirichlet problem for the Bergman Laplacian. I,}   Comm. Partial Differential Equations {\bf 5} (1983), 433--476.

\bibitem[H\"or]{Hor}  L. H\"ormander, {\em An introduction to
complex analysis in several variables,}
North-Holland, 1990. 


\bibitem[Jen]{Jen}  G. Jensen,  {\em Projective deformation and biholomorphic equivalence of real hypersurfaces.\,}  Ann. Global Anal. Geom. {\bf 1} (1983),  1--34.

\bibitem[Lav1]{Lav1} G. Laville, {\em Fonctions pluriharmoniques et solution fondamentale d'un
   op\'erateur du 4e ordre,}  Bull. Sci. Math. (2) {\bf 4} (1977), 305--317.
   
\bibitem[Lav2]{Lav2} G. Laville, {\em Caract\'erisation des traces des fonctions pluriharmoniques par un
   seul op\'erateur,}  Mathematical structure---computational
      mathematics---mathematical modelling, 2, Publ. House Bulgar. Acad. Sci., 1984, 244-247.
      
\bibitem[Lee]{Lee} J. Lee, {\em Pseudo-Einstein structures on CR manifolds,}  Amer. J. Math. {\bf 1} (1988), 157--178.

\bibitem[Rud]{Rud}
W. Rudin, {\em Function theory in the unit ball of $\C^n$,}  Springer-Verlag,  1980.

\bibitem[Sev]{Sev} F. Severi, {\em Risoluzione generale del problema di Dirichlet per le funzioni biarmoniche,} Atti Accad. Naz. Lincei {\bf 13} (1931), 795--804.

\bibitem[Wir]{Wir} W. Wirtinger, {\em Zur formalen {T}heorie der {F}unktionen von mehr komplexen
              {V}er\"anderlichen,} Math. Ann. {\bf 97} (1927), 357--375.

\end{thebibliography}
\end{document}